\documentclass[poms,final,nonblindrev]{arxiv} 

\usepackage{graphicx}
\usepackage{subfigure, epsfig}
\usepackage{natbib}
\usepackage{newtxtext}
\usepackage{newtxmath}
\usepackage{float}
\usepackage{multirow}

\usepackage{calrsfs}
\usepackage{amsmath, amssymb, bm}
\usepackage{longtable}

 \bibpunct[, ]{(}{)}{,}{a}{}{,}%
 \def\bibfont{\small}%

\TheoremsNumberedThrough    
\ECRepeatTheorems

\EquationsNumberedThrough    
\begin{document}

\RUNTITLE{Green Capacity Planning by AI+DRO}

\TITLE{Green Manufacturing Capacity Planning by Integrating Distributionally Robust Optimization and Generative AI}

\ARTICLEAUTHORS{%
\AUTHOR{Xin Zhou}
\AFF{Antai College of Economics and Management,Shanghai Jiao Tong University, \EMAIL{shaynezhou@sjtu.edu.cn}}
\AUTHOR{Zhengsong Lu}
\AFF{Department of Industrial Engineering, University of Pittsburgh, \EMAIL{zs.lu@pitt.edu}}
\AUTHOR{Bo Zeng}
\AFF{Department of Industrial Engineering, University of Pittsburgh, \EMAIL{bzeng@pitt.edu}}
\AUTHOR{Na Geng*}
\AFF{Sino-US Global Logistics Institute, Shanghai Jiao Tong University, \EMAIL{gengna@sjtu.edu.cn}}
}

\ABSTRACT{
Green manufacturing has become a strategic priority for many firms seeking to address sustainability and social responsibility, while improving production efficiency and profitability. 
However, integrating green technologies and renewable energy unavoidably introduces climate-related randomness that affects both product demand and renewable energy generation, underscoring the need for coordinated planning of production capacity and renewable energy development. 
To address this challenge, we develop a comprehensive two-stage distributionally robust optimization (DRO) model for green manufacturing capacity planning in a multi-factory, multi-capacity, and multi-product setting, based on an ambiguity set constructed by a data-driven clustering technique that leverages historical data of different availabilities and qualities. 
To handle the computational challenges of practical instances, an effective generative AI network is integrated into an exact decomposition algorithm, through a novel encoding/decoding scheme designed to provide the AI model with structurally informative training data and to convert AI-generated outputs into algorithm-accessible formats. 
Experimental results on real-world instances demonstrate that the proposed DRO approach achieves strong economic performance and robust feasibility under demand and renewable generation uncertainty, while also significantly improving computational efficiency and solution consistency relative to the standard approaches. 
Furthermore, our results highlight the managerial value of integrating green technology adoption with coordinated capacity planning to better utilize renewable energy and align production efficiency with sustainability and corporate social responsibility objectives.}

\KEYWORDS{green manufacturing, capacity planning, distributionally robust optimization, generative AI}

\maketitle
\section{Introduction}\label{sect_intro}
The manufacturing sector is one of the primary contributors to global energy consumption and carbon emissions, thereby underscoring its critical role in advancing global climate goals. 
According to the World Energy Outlook 2024 \citep{laura_world_2024}, the manufacturing industry accounted for more than one-third of global energy consumption in 2023. Moreover, within the lifecycle emissions of final products, the manufacturing stage alone was responsible for over half of the associated CO$_2$ emissions, further emphasizing the sector’s pivotal role in achieving sustainability. 
Over the past 20 years, the confluence of stringent carbon-emission regulations and consumers’ growing demand for eco-friendly products has introduced unprecedented challenges for manufacturers, including intensified market competition and elevated operational costs. 
To navigate this tension, firms worldwide have accelerated their transition toward green manufacturing, with green practices playing a central role in this transformation.
For instance, as early as 2022, the BMW Group announced that it had fully transitioned to 100\% renewable energy for its production processes in Europe. 
Similarly, the under-construction Tesla Giga Shanghai factory is projected to deploy over 6 MW of rooftop photovoltaic (PV) capacity, highlighting the growing scale of renewable energy integration in manufacturing.

Against this backdrop, jointly considering production capacity and renewable energy planning plays a critical role in enhancing the sustainability, efficiency, and resilience of green manufacturing systems. 
Production capacity planning determines manufacturing scale, resource allocation, and long-term operational efficiency. 
However, without proper alignment with renewable energy availability, manufacturers may either over-invest in renewable infrastructure or under-provision capacity, thereby undermining sustainability objectives. 
At the same time, renewable energy output is inherently subject to climate variability and cannot be treated as a resource with fixed capacity. 
For example, peak sunshine hours, a key climatic factor, serves as a dominant determinant of solar energy production \citep{rehman_cost_2007}. 
This climate-driven uncertainty in renewable energy generation, when coupled with product demand uncertainty, significantly complicates strategic and operational decision-making. 
Consequently, joint planning of production capacity and renewable energy development can yield substantial strategic benefits by enabling adaptive production scheduling and coordinated capacity expansion alongside renewable energy deployment.

Capacity planning under uncertainty has been a central focus of production and operations management research for decades.
This research field has primarily relied on two traditional approaches: stochastic programming (SP) and robust optimization (RO), with distinct strengths and limitations.   
As a well-established paradigm, SP assumes that the underlying uncertainty can be accurately characterized by a known probability distribution. However, such an assumption might not hold due to limited data availability or poor data quality.  
Deviations from the assumed distribution can lead to solutions with degraded performance, causing SP to yield overly optimistic decisions \citep{shapiro_chapter_2021}.
RO, as a more recent alternative, focuses on worst-case performance without requiring distributional information for uncertain parameters \citep{ben-tal_robust_2009}. This feature enhances its applicability in settings where historical data are scarce for obtaining dependable distributional information or where system reliability is absolutely critical. However, RO is often criticized for yielding overly conservative decisions that lead to reduced economic efficiency.

In recent years, distributionally robust optimization (DRO) has emerged as a flexible and powerful framework for modeling and mitigating uncertainty \citep{delage_distributionally_2010,wiesemann_distributionally_2014}.
Rather than relying on a single presumed probability distribution, DRO constructs an ambiguity set that contains a family of possible distributions. It then seeks solutions that explicitly account for this distributional uncertainty, enabling a systematic trade-off between robustness and economic performance. From a unifying perspective, SP and RO can be interpreted as special cases of DRO. 
Specifically, DRO reduces to SP when the ambiguity set consists of a single known distribution and reduces to RO when the ambiguity set contains all Dirac delta distributions supported on its sample space. Consequently, DRO provides substantial flexibility and modeling capability for handling different types and levels of uncertainty.

Nevertheless, DRO, especially the two-stage DRO, still faces non-trivial computational challenges when applied to large-scale and sophisticated instances. To overcome these challenges, several solution strategies have been developed in the literature, including reformulations into mixed-integer (nonlinear) programming (MIP) equivalents and decomposition-based algorithms. Nevertheless, these approaches are often restricted to special cases or exhibit limited computational scalability and consistency in practical settings. Compared to its SP and RO counterparts, two-stage DRO has seen far fewer real-world applications. 
Meanwhile, the past decade has witnessed growing interest in leveraging advanced artificial intelligence (AI) techniques to enhance optimization algorithms, especially decomposition-based ones. Non-trivial progress has been made in developing AI-assisted Benders decomposition and column generation (CG). 
Yet, to the best of our knowledge, little research has explored the integration of AI techniques and DRO. 
This gap suggests that the development of AI-enhanced solution methods for DRO is both timely and essential to unlock the applicability of DRO across a wide range of real-world problems.

In green manufacturing systems, DRO is well suited due to the nature and availability of data: while historical product demand data are often sparse, fragmented, or subject to rapid changes, long-term climate data (e.g., seasonal solar irradiance and temperature patterns) are typically more comprehensive, consistent, and accessible. 
This setting allows us, under the DRO framework, to employ data-driven clustering techniques to generate representative regimes (with probabilities) to capture reliable climatic variability and adopt ambiguity sets to model and hedge against less-understood randomness in product demand. Then, we develop a two-stage DRO to solve the green manufacturing capacity planning problem. 
To address the associated computational challenges, we develop a novel AI-enhanced exact algorithm that achieves scalability and consistency. It differs fundamentally from prior AI-assisted optimization approaches by embedding structural properties directly into the learning process via an encoding/decoding scheme, rather than treating AI as an external black-box component.
In turn, the well-trained AI model is instrumental in accelerating convergence and stabilizing algorithmic behavior.

The remainder of this paper is organized as follows. 
Section 2 reviews the related literature on green manufacturing, DRO, and the application of machine learning/AI tools in optimization algorithms. 
Section 3 introduces the problem context and presents a two-stage DRO model for green manufacturing capacity planning. Section 4 develops the solution methodology and proposes a generative AI–enhanced solution architecture. Section 5 details the encoding/decoding scheme and the training procedures of the AI network. Section 6 presents the experimental results and analysis regarding both the DRO model and the algorithmic performance. Finally, Section 7 concludes the paper.

\section{Literature Review}

\subsection{Strategic Planning for Green Manufacturing}

As enterprises increasingly prioritize the “triple bottom line” (economic, environmental, and social performance), green manufacturing has evolved from a niche practice to a core strategic imperative for manufacturers, with studies demonstrating its ability to create mutual value for firms and society \citep{lee_socially_2018,de_haas_managerial_2025}. 
To date, green manufacturing is a central focus of strategic planning research in the manufacturing sector.

In the context of strategic planning, carbon emission regulations (e.g., carbon taxes, cap-and-trade) have emerged as key drivers of strategic capacity decisions, as they directly impact investment costs and operational flexibility. \citet{drake_technology_2016} linked regulations to dual strategic choices: not only capacity configuration but also green technology adoption, finding that stringent carbon policies accelerate the replacement of conventional equipment with low-carbon alternatives. 
\citet{song_capacity_2017} showed that both carbon emission price and carbon tax significantly affect capacity expansion and production decisions, while \citet{huang_carbon_2021} extended this by demonstrating how regulations prompt capacity shifting (e.g., relocating production to regions with lower carbon intensity).  
This stream primarily focuses on capacity adjustments for emission reduction rather than integrating green technologies into broader production capacity planning.

Another main research stream of green manufacturing is renewable energy integration, which focuses on designing manufacturing systems that leverage distributed renewable sources (e.g., solar, wind) to reduce carbon footprint. \citet{pham_multi-site_2019} optimized the location and size of renewable energy facilities for multi-site manufacturing networks. A two-stage SP model was proposed to attain net-zero energy production-logistics operations through renewable microgrid generation. 
\citet{bertsimas_decarbonizing_2023} developed a framework to align renewable energy procurement with production schedules. A notable exception is \citet{zhou_towards_2025}, which considered both renewable energy capacity and production capacity. Yet, their model is restricted to a simplified setting with a single product and a single site and, therefore, does not capture the multi-factory, multi-category realities of modern manufacturing. 
Moreover, \citet{lin_designing_2020} showed that green technology adoption (e.g., energy-efficient machinery) is critical for green manufacturing, as it helps scale the benefits of renewable energy. 
Nevertheless, existing studies have not integrated green technology adoption, renewable energy capacity planning, and production capacity planning within a general strategic planning framework.

\subsection{Uncertainty Modeling in Manufacturing Strategic Planning}
Addressing operations-centric uncertainty is a fundamental concern in strategic planning research. 
Traditionally, demand uncertainty has been the primary source of randomness in strategic planning decisions of manufacturing systems \citep{martinez-costa_review_2014,lyu_capacity_2019}. 
As mentioned in Section \ref{sect_intro}, green manufacturing introduces an additional layer of complexity, namely climate-driven renewable energy generation uncertainty, where the product demand may be correlated with climatic factors. 
In what follows, we review how three popular methodologies, i.e., SP, RO, and DRO, have been used to address uncertainty and discuss their limitations for decision-making in green manufacturing systems.

SP has long served as a primary approach for capacity planning under uncertainty, as it directly models random parameters via a probability distribution estimated from historical data. 
For example, \citet{lee_proactive_2014} optimized short-term capacity planning using an SP model by incorporating diminishing marginal returns on investment. For green manufacturing, \citet{song_capacity_2017} extended SP to include carbon tax uncertainty, showing that flexible capacity (e.g., modular production lines) reduces regulatory risk. Most recently, \citet{yu_value_2024} developed a multi-stage SP framework for dynamic capacity expansion, capturing the demand evolution over time.
However, SP has critical limitations, as it requires large amounts of high-quality historical data to (1) estimate and validate distributional parameters and (2) to capture correlations among random factors that are otherwise assumed to be independent in general.
 
RO avoids reliance on probabilistic distributions by directly optimizing worst-case performance over a predefined uncertainty set, thus requiring no distributional assumptions. 
In the green manufacturing literature, \citet{zhou_towards_2025} employed RO to jointly plan renewable energy and production capacities under demand and carbon price uncertainty, demonstrating that RO can reduce the risk of failing to meet sustainability targets. 
Similarly, \citet{gao_adjustable_2025} applied RO to reverse supply chains, a key component of green manufacturing systems. 
The primary critique of RO lies in its inherent conservatism, which often leads to excessive over-investment \citep{jakubovskis_flexible_2017}. This issue is particularly problematic for manufacturers seeking to balance sustainability objectives with economic profitability.

As a generalization of SP and RO, DRO has been applied to problems with demand uncertainty, including lot-sizing \citep{zhang_distributionally_2016} and facility location \citep{saif_data-driven_2021, liu_robust_2022} problems. \citet{basciftci_distributionally_2021} further extended DRO to random, endogenous demand. 
As previously noted, once distributed renewable energy is incorporated, manufacturing systems inevitably face climate-driven renewable energy generation uncertainties \citep{golari_multistage_2017, pham_multi-site_2019}, which necessitates handling more complex ambiguity sets involving multiple sources of uncertainty. 
Owing to both modeling and computational challenges, as well as the inherent operational complexity of green manufacturing systems, existing DRO studies in this domain remain limited.
Specifically:
(1) most studies focus solely on demand uncertainty, ignoring climate-driven renewable generation,
(2) no prior work explicitly models the correlation between climatic factors and demand, and 
(3) integrating multiple sources of uncertainty within multi-stage DRO formulations for large-scale manufacturing networks remains essentially unexplored.

\subsection{AI-Enhanced Optimization Algorithms}

Recent advances in AI have opened new avenues to address the computational complexity of large-scale optimization problems, which are prevalent in real-world manufacturing systems involving thousands of variables and constraints.
Most existing studies enhance exact optimization algorithms, such as branch-and-bound, Benders decomposition, and CG, by incorporating AI-based strategy selection, value approximation, or warm-start mechanisms.

In branch-and-bound algorithms, for example, \citet{alvarez_machine_2017} and \citet{rajabalizadeh_solving_2024} employ ExtraTrees and logistic regression techniques to accelerate node selection.
Similarly, several studies have employed unsupervised learning methods to identify high-quality cuts, thereby enhancing the performance of Benders decomposition algorithms \citep{jia_benders_2021,wu_machine_2025}. 
In studies applying AI to CG, the primary focus is on accelerating the algorithm by improving column selection during each iteration \citep{morabit_machine-learningbased_2021,morabit_machine-learningbased_2023}.
With the introduction of graph neural networks (GNNs) to model linear programs (LP) or MIP \citep{nair_solving_2021}, it has become possible to capture the structure of the master problem and rapidly evaluate the quality of candidate columns.
The effectiveness of GNN-based column selection has been demonstrated by \citet{morabit_machine-learningbased_2021,chi_deep_2022} and \citet{cui_configuration_2025}
These studies include the vehicle routing problem with time windows, the cutting stock problem, and reconfigurable manufacturing systems.
However, the performance of such methods depends on the effectiveness of solving the pricing problem, which entails generating a sufficient number of high-quality columns.
When solving the pricing problem proves costly, an alternative and more straightforward idea is to initialize columns in the master problem using neural networks \citep{kraul_machine_2023,hijazi_all_2024} or heuristic methods \citep{liang_column_2018}.
Yet, previous initialization research lacks the ability to efficiently generate large batches of columns. Moreover, there is no guarantee that the predicted columns are feasible, which necessitates additional validation and post-processing operations. 

\subsection{Research Gaps and Discussion}
To the best of knowledge, there is no comprehensive framework that simultaneously integrates production capacity planning, green technology adoption, and distributed renewable energy considerations for multi-product, multi-site manufacturers. 
Such integration is critically important, as it is essential for these manufacturers to comply with regulations and achieve their sustainability goals. 
In the absence of such a framework, manufacturers will struggle to optimize their operations across different products and sites while adhering to environmental regulations, meeting sustainability targets, and achieving social responsibility goals. 
Moreover, with respect to uncertainty modeling in green manufacturing systems, existing optimization models fail to account for climate-relevant demand and renewable energy generation, as well as the correlations between these sources of uncertainty. 
This oversight is significant, as these uncertainties and their interdependencies can substantially affect the effectiveness of strategic planning decisions in green manufacturing systems. 

With respect to the application of AI in manufacturing, numerous successes have been reported in demand forecasting, process monitoring, and operational diagnostics, given the strong ability of AI models to extract patterns from data and approximate complex functional relationships.
AI models are, however, often limited by their black-box nature and their limited awareness of feasibility boundaries in decision spaces. 
These limitations hinder their direct integration into computational algorithms for decision-making problems, including those encountered in manufacturing systems. 
In particular, systematically exploiting the structural characteristics of decision models to enhance the feasibility and accuracy of AI model's output remains largely unexplored.

To address the aforementioned gaps, we collaborate with a manufacturer operating multiple factories, production lines, and product categories to address a real-world capacity planning problem that involves green technology adoption. 
To reflect the underlying decision-making structure and capture key sources of uncertainty, the planning problem is formulated as a two-stage DRO model, where a data-driven clustering method is employed to capture two critical uncertainties, product demand fluctuations and climate-driven renewable energy generation variability. 
To solve the resulting large-scale problem, we customize a recently proposed exact C\&CG-DRO algorithm \citep{lu_two-stage_2024}. 
Furthermore, through innovative designs and implementations, our model's structural properties are represented through an encoding/decoding scheme to leverage a powerful generative AI model, which when integrated into the algorithm, provides a significant improvement in computational efficiency and consistency.

We note that this study establishes and demonstrates a feature engineering framework to bridge the gap between AI tools and exact optimization algorithms. 
By first uncovering the structural characteristics of the decision problem and then designing and applying an effective data encoding/decoding scheme, we provide the AI model with structurally informative data for learning and transform AI-generated outputs into formats that are compatible with rigorous mathematical optimization.
Specific to solving a DRO model, we introduce a scenario-to-image encoding method that enables an AI model to learn meaningful, latent patterns among critical scenarios. 
The generated images can then be decoded to construct a high-quality scenario set that can be directly integrated into the C\&CG-DRO algorithm, leading to faster convergence without sacrificing solution exactness. 
Most existing AI models are not directly suitable for optimization-based decision problems, and research on feature engineering, particularly problem-specific encoding/decoding schemes that integrate AI with optimization methodologies, remains very limited. 
Current studies largely rely on standard feature identification and representation techniques, with little customization or tailoring to the key structural properties of an underlying optimization model. 
This gap suggests an important research direction is the development of customizable feature-engineering techniques for integrating AI and optimization algorithms.
Such techniques would leverage the learning capabilities of AI to uncover hidden structural patterns, while ensuring that the generated outputs are compatible with, and valuable to, exact and approximation solution procedures.

\section{A DRO Model for Green Manufacturing Capacity Planning}
In this section, we first introduce the capacity planning problem for green manufacturing and present a two-stage DRO model. 
A data-driven clustering scheme based on historical climate data is proposed to capture the inherent dependence between climate-relevant demand and renewable energy generation. 
Additionally, we establish several structural properties of the DRO model that provide modeling insights and computational advantages, and facilitate the development of our AI-enhanced solution approach.  

\subsection{The Strategic Stage Description and Model}
Consider a two-stage capacity planning problem for a multi-site, multi-capacity, multi-product manufacturer. 
Let $\mathcal{T}$ denote the set of periods in the planning horizon, $\mathcal{I} $ the set of factories, $\mathcal{J} $ the set of capacity types, and $\mathcal{K}$ the set of product categories, where each capacity type is capable of producing one or more product categories. 
In the strategic (first) stage, the manufacturer decides on capacity expansion, reduction, and technological upgrading of conventional production lines over $\mathcal{T}$. 
For existing production lines, those with conventional technologies produce only traditional products. 
However, when upgraded to green production lines, they acquire the capacity to produce green products. 
Importantly, this upgrade is irreversible; once a production line is made green, it cannot be reverted to its previous state. 
To enable the production of green products, renewable energy generation capacity, such as on-site rooftop PV systems, must be installed with investment cost $I^{R}_{i}$. 
If available production capacity is insufficient, additional production lines need to be set up. Similar to existing lines, these new lines also require technological upgrading and on-site renewable energy generation to produce green products. Conversely, if the available capacity exceeds demand, some conventional production lines can be shut down to cut costs. 
Next, we describe the mathematical representation of the first-stage decisions, assuming all production lines are conventional ones before planning. 

Let $X_{ijt} \in \mathbb{N}$ denote the number of production lines of capacity type $j$ in period $t$ at factory $i$. 
To represent the capacity adjustments to the number of production lines in $t$, let $X^+_{ijt} \in \mathbb{N}$ denote the number of production lines (with a unit expansion cost $I^+_{ij}$) added in $t$, and $X^-_{ijt} \in \mathbb{N}$ the number of production lines (with a unit termination cost $I^-_{ij}$) terminated in $t$. The capacity balance constraint across consecutive periods is given by 
 \begin{equation}\label{eq:1}
    X_{ij(t+1)} = X_{ijt}+X^{+}_{ijt}-X^{-}_{ijt},\forall i \in \mathcal{I},\forall j \in \mathcal{J},1 \leq t \leq T-1,
 \end{equation}
with $\hat{x}_{ij}$ being the initial capacity configuration, i.e. $X_{ij1} = \hat{x}_{ij}$.
The number of production lines added and terminated at each factory is constrained by the capacity adjustment limits $f^{+}_{ij}$ and $f^{-}_{ij}$ in each period:
 \begin{equation}\label{eq:2}
    X^{+}_{ijt} \leq f^{+}_{ij},X^{-}_{ijt} \leq f^{-}_{ij},\forall i \in \mathcal{I},\forall j \in \mathcal{J},\forall t \in \mathcal{T}.
 \end{equation}
Let $X^{O}_{ijt} \in \mathbb{N}$ and $X^{N}_{ijt} \in \mathbb{N}$ denote the numbers of production lines with conventional and green technology. The following relationship holds:
 \begin{equation}\label{eq:3}
    X^{O}_{ijt}+X^{N}_{ijt} = X_{ijt},\forall i \in \mathcal{I},\forall j \in \mathcal{J},\forall t \in \mathcal{T}.
 \end{equation}
For the green technology upgrade plan, let $X^{N+}_{ijt} \in \mathbb{N}$ denote the number of production lines upgraded to green technology in period $t$ at factory $i$ for capacity type $j$, incurring a unit cost $I^{N+}_{ij}$. 
The following balance constraint is imposed:
 \begin{equation}\label{eq:4}
    X^{N}_{ij(t+1)} = X^{N}_{ijt}+X^{N+}_{ijt},\forall i \in \mathcal{I},\forall j \in \mathcal{J},1 \leq t \leq T-1,
 \end{equation}
where the initial configurations of green technology production lines are given as $X^{N}_{ij1} = \hat{x}^{N}_{ij}$.

In this study, all $\hat{x}^{N}_{ij}$ are set to 0. 
Moreover, we introduce the variable $X^{BR}_{i} \in \{ 0,1\}$ such that it is equal to $1$ if factory $i$ is equipped with renewable energy generation facilities and $0$ otherwise. 
As discussed, green technology upgrades and renewable energy generation are both necessary for green production. Hence, the following logical constraints between $X^{N+}_{ijt}$ and $X^{BR}_{i}$ are introduced to tighten the formulation. 
 \begin{equation}\label{eq:5}
    X^{N+}_{ijt} \leq M_{0}X^{BR}_{i},\forall i \in \mathcal{I},\forall j \in \mathcal{J},\forall t \in \mathcal{T};
 \end{equation}
 \begin{equation}\label{eq:6}
    X^{BR}_{i} \leq \sum_{\mathcal{J},\mathcal{T}}X^{N+}_{ijt},\forall i \in \mathcal{I},
 \end{equation}
where $M_{0}$ is a positive constant representing an upper bound on the total number of production lines throughout the planning horizon. Let $\boldsymbol{X}$ collectively denote the vector of first-stage decision variables, and $\boldsymbol{C}_{\boldsymbol{X}}$ denote the corresponding coefficient vector in the objective function. 
The feasible set $\mathcal{X}$ of $\boldsymbol{X}$ is defined by constraints \eqref{eq:1}-\eqref{eq:6}.
With the objective of minimizing strategic costs, including the adjustment cost of existing production lines, the upgrade cost of production lines to green technology, and the investment cost of renewable energy facilities, the first-stage model is formulated as follows:
 \begin{equation}\label{eq:7}
    \min_{\boldsymbol{X} \in \mathcal{X}} \boldsymbol{C}_{\boldsymbol{X}}\boldsymbol{X} = \min_{\boldsymbol{X} \in \mathcal{X}} \sum_{\mathcal{I},\mathcal{J},\mathcal{T}}(I^{+}_{ij}X^{+}_{ijt}+I^{-}_{ij}X^{-}_{ijt})+\sum_{\mathcal{I},\mathcal{J},\mathcal{T}}I^{N+}_{ij}X^{N+}_{ijt}+\sum_{\mathcal{I}}I^{R}_{i}X^{BR}_{i}.
 \end{equation}

\subsection{The Operational Stage Description and Model}
In the operational (second) stage, the manufacturer determines production quantities for each product type. 
As products produced on green technology production lines using renewable energy are classified as green products, we let $Y^{NG}_{ijkt} \in \mathbb{R}_{+}$ denote the quantity of green products of type $k$ produced in period $t$ by capacity type $j$ at factory $i$. 
When the renewable energy supply is inadequate, or when production takes place on conventional technology lines, some products must be produced by conventional energy and are classified as traditional products. 
The quantities of such products produced on green technology and conventional technology production lines are denoted as $Y^{NT}_{ijkt} \in \mathbb{R}_{+}$ and $Y^{OT}_{ijkt} \in \mathbb{R}_{+}$, respectively.
The unit production costs on conventional and green technology production lines are denoted as $c^{O}_{ijk}$ and $c^{N}_{ijk}$, respectively.

We introduce parameter $b_{jk}$, which is set to a sufficiently large constant if capacity type $j$ can produce product type $k$, and to $0$ otherwise. The following constraints ensure that capacity type $j$ is restricted to producing only eligible product types:
 \begin{equation}\label{eq:8}
    Y^{OT}_{ijkt} \leq b_{jk}X^{O}_{ijt},Y^{NT}_{ijkt} \leq b_{jk}X^{N}_{ijt},Y^{NG}_{ijkt} \leq b_{jk}X^{N}_{ijt},\forall i \in \mathcal{I},\forall j \in \mathcal{J},\forall k \in \mathcal{K},\forall t \in \mathcal{T},
 \end{equation}

To account for process differences, the capacity utilization associated with product type $k$ is characterized by distinct capacity utilization rates, denoted as $a^{N}_{jk}$ and $a^{O}_{jk}$ for capacity type $j$ with green and conventional technologies, respectively. 
The maximum number of products that can be produced by a single production line of capacity type $j$ with green and conventional technologies within one decision period is given by $n^{N}_{j}$ and $n^{O}_{j}$, respectively. 
The capacity constraints are thus formulated as follows:
 \begin{equation}\label{eq:9}
    \sum_{\mathcal{K}}a^{O}_{jk}Y^{OT}_{ijkt} \leq n^{O}_{j}X^{O}_{ijt},\sum_{\mathcal{K}}a^{N}_{jk}(Y^{NT}_{ijkt}+Y^{NG}_{ijkt}) \leq n^{N}_{j}X^{N}_{ijt},\forall i \in \mathcal{I},\forall j \in \mathcal{J},\forall t \in \mathcal{T}.
 \end{equation}
Throughout the decision horizon, the manufacturer must ensure that the proportion of green products in its total production meets the predetermined green penetration requirement $\tau$, which is: 
 \begin{equation}\label{eq:10}
    \sum_{\mathcal{I},\mathcal{J},\mathcal{K},\mathcal{T}}Y^{NG}_{ijkt} \geq \tau \sum_{\mathcal{I},\mathcal{J},\mathcal{K},\mathcal{T}}(Y^{OT}_{ijkt}+Y^{NT}_{ijkt}+Y^{NG}_{ijkt}).
 \end{equation}
 
Let $\xi_{kt}$ denote the demand for product $k$ in period $t$. 
If the total production (including both green and traditional variants as defined above) is insufficient to meet the demand, the manufacturer incurs unmet demand $Y^{U}_{kt} \in \mathbb{R}_{+}$ subject to a service level requirement $\lambda$, and a unit penalty cost $c^{U}_{k}$ is imposed for such shortfalls.
The demand balance constraints and service level constraints, which integrate these considerations with the preceding production decisions, are as follows:
 \begin{equation}\label{eq:11}
    \sum_{\mathcal{I},\mathcal{J}}(Y^{OT}_{ijkt}+Y^{NT}_{ijkt}+Y^{NG}_{ijkt})+Y^{U}_{kt} = \xi_{kt},\forall k \in \mathcal{K},\forall t \in \mathcal{T};
 \end{equation}
 \begin{equation}\label{eq:12}
    Y^{U}_{kt} \leq (1-\lambda)\xi_{kt},\forall k \in \mathcal{K},\forall t \in \mathcal{T}.
 \end{equation}

For green product manufacturing, let $\omega_{it}$ denote the peak sunshine hours in the region hosting factory $i$ in period $t$, $E_{i}$ the PV capacity at factory $i$, and $e_{jk}$ the energy consumption per unit of green product.
The next constraint is imposed to ensure that green product production relies exclusively on PV generation: 
 \begin{equation}\label{eq:13}
    \sum_{\mathcal{J},\mathcal{K}}e_{jk}Y^{NG}_{ijkt} \leq E_{i}X^{BR}_{i}\omega_{it},\forall i \in \mathcal{I},\forall t \in \mathcal{T}.
 \end{equation}

Let $\boldsymbol{\xi}$ and $\boldsymbol{\omega}$ denote the vectors of product demand and climate, respectively, and $\boldsymbol{Y}$ denote the vector of all decision variables. The feasible set of $\boldsymbol{Y}$ is defined by constraints \eqref{eq:8}-\eqref{eq:13}, which can be compactly expressed as
$\mathcal{Y}(\boldsymbol{X},\boldsymbol{\xi},\boldsymbol{\omega}) = \left\{\boldsymbol{Y}:\boldsymbol{B}_{\boldsymbol{X}}\boldsymbol{X}\boldsymbol{\omega}+\boldsymbol{B}_{\boldsymbol{Y}}\boldsymbol{Y}+\boldsymbol{B}_{\boldsymbol{\xi}}\boldsymbol{\xi} \geq \boldsymbol{d}, \boldsymbol{Y} \geq \boldsymbol{0}  \right\}$.
Thus, the manufacturer's decision model in the operational stage can be formulated as
 \begin{equation}\label{eq:14}
    \begin{aligned}
        Q(\boldsymbol{X},\boldsymbol{\xi},\boldsymbol{\omega}) = &\min_{\boldsymbol{Y} \in \mathcal{Y}(\boldsymbol{X},\boldsymbol{\xi},\boldsymbol{\omega})} \boldsymbol{C}_{\boldsymbol{Y}}^\top \boldsymbol{Y}\equiv\sum_{\mathcal{I},\mathcal{J},\mathcal{K},\mathcal{T}}c^{O}_{ijk}Y^{OT}_{ijkt} +\sum_{\mathcal{I},\mathcal{J},\mathcal{K},\mathcal{T}}c^{N}_{ijk}(Y^{NT}_{ijkt}+Y^{NG}_{ijkt})+\sum_{\mathcal{K},\mathcal{T}}c^{U}_{k}Y^{U}_{kt}.
    \end{aligned}
 \end{equation}

\subsection{Two-Stage DRO Model and Data-Driven Clustering}
The strategic model in \eqref{eq:7} and the operational model in \eqref{eq:14} can be integrated into a monolithic formulation to jointly determine capacity and production decisions over the planning horizon. 
However, $\boldsymbol{\xi}$ and $\boldsymbol{\omega}$, which represent product demand and renewable energy generation respectively, are random parameters and are not fully known at the time capacity planning decisions are made.
Let $\mathcal{P}$ denote the ambiguity set comprising a family of joint probability distributions of product demands and renewable energy generation, and $\mathbb P$ denote a particular distribution in $\mathcal{P}$. The two-stage green manufacturing capacity planning DRO problem is formulated as the following model:
 \begin{equation}\label{eq:15}
     \min_{\boldsymbol{X} \in \mathcal{X}} \boldsymbol{C}^\top_{\boldsymbol{X}}\boldsymbol{X} + \max_{\mathbb{P} \in \mathcal{P}} \mathbb{E}_{\mathbb{P}} \left[ Q(\boldsymbol{X},\boldsymbol{\xi},\boldsymbol{\omega}) \right].
 \end{equation}
It is well recognized that the choice of ambiguity set $\mathcal P$ is critical in DRO, which determines the trade-off between robustness and performance. An overly large ambiguity set may lead to conservative decisions, while an excessively restrictive one may reduce the DRO model to an optimistic SP model. 
Its construction depends on data availability, quality, and the risk preferences of the decision maker. 
In green manufacturing, several practical challenges must be considered in the development of such ambiguity sets. 

First, there exists a non-trivial statistical correlation between demand uncertainty and the uncertainty associated with renewable energy generation, both of which depend on climate. 
Currently, the majority of existing DRO approaches assume independent marginal distributions or simple parametric correlations to define $\mathcal{P}$.
However, the interdependence of the uncertainties of product demands and renewable energy generation renders these assumptions invalid and necessitates the use of nonparametric or data-driven joint ambiguity sets to capture real-world stochasticity. 
Second, the available data may lack the unified structure necessary for effective analysis and modeling. 
Motivated by a real-world capacity planning problem faced by a food and beverage manufacturer, we observe a pronounced asymmetry in data availability and quality.
On the one hand, long-term, high-frequency historical data on climate factors (e.g., daily peak sunshine hours in the region hosting a factory) are accurate, reliable, and comprehensive. 
Indeed, renewable energy generation is largely determined by peak sunshine hours, making this climatic factor the dominant driver of solar output.
On the other hand, product demand data are often sparse, incomplete, and unreliable. 
This disparity in data availability and quality implies that two data sources cannot be treated equivalently.

In light of the aforementioned issues, together with the central role and abundant data availability of climate factors, we adopt a climate-centric approach to construct the ambiguity set. 
As renewable energy generation is directly driven by key climatic parameters, we let $\boldsymbol{\omega}$ denote these parameters, with slight abuse of notation. 
Specifically, we factorize the joint distribution as $\mathbb{P} = \mathbb{P}_{\boldsymbol{\omega}}(\boldsymbol{\omega}) \cdot \mathbb{P}(\boldsymbol{\xi} \mid \boldsymbol{\omega})$. 
This factorization allows the DRO model in \eqref{eq:15} to be rewritten as:
 \begin{equation}\label{eq:16}
     \min_{\boldsymbol{X} \in \mathcal{X}} \boldsymbol{C}^\top_{\boldsymbol{X}}\boldsymbol{X} + \max_{\mathbb{P}_{\omega} \in \mathcal{P}_{\omega}} \mathbb{E}_{\omega \sim \mathbb{P}_{\omega}} \left[ \max_{\mathbb{P}(\cdot|\omega) \in \mathcal{P}_{\xi|\omega}} \mathbb{E}_{\xi \sim \mathbb{P}(\cdot|\omega)} \left[ Q(\boldsymbol{X},\boldsymbol{\xi},\boldsymbol{\omega}) \right] \right],
 \end{equation}
where $\mathcal{P}_{\boldsymbol{\omega}}$ and $\mathcal{P}_{\boldsymbol{\xi}\mid\boldsymbol{\omega}}$ are the corresponding ambiguity sets. 
This formulation, in its general form, contains nested max-expectation operators, which to the best of our knowledge, has no exact algorithm in the literature to date. 
Nevertheless, this structural decomposition allows us to fully take advantage available data and model renewable energy generation and product demand using different approaches. 

For climate data, we employ the $k$-means clustering method to partition the extensive and high-quality historical climate data set into $S$ clusters with $\mathcal{S} = \{1,2,\cdots,S\}$. 
Each distinct climate cluster $s \in \mathcal{S}$ represents a set of historical periods with statistically homogeneous climate conditions, which primarily determine climate-driven renewable energy generation while preserving its practical relevance for demand uncertainty.  
We estimate the empirical probability $q^{s}$ for cluster $s$ as the ratio of the cardinality of cluster $s$ to the total number of historical samples.
As a result, instead of accounting for continuous variability in climate conditions, we map the range of climatic variability to a finite set of discrete centroids of those clusters, denoted by $\{\omega_1, \omega_2,\cdots, \omega_S\}$.
In practice, an appropriate $S$ should be selected to have a desired trade-off between modeling complexity and descriptive granularity. 
We note that this treatment is well aligned with the abundance and high quality of available climate data.

For product demand data, we aim to characterize the demand distribution for each individual climate cluster.
We assume that product demands are stochastically independent across product types, which is generally consistent with the manufacturer’s operational reality. 
Then, by leveraging available data and industry expertise, we define a box sample space, and lower and upper bounds, denoted by $\boldsymbol{\gamma}^{Ls}$ and $\boldsymbol{\gamma}^{Us}$  respectively, on the first moment of the demand distributions, thereby constructing a typical moment-based ambiguity set in DRO literature.
This treatment, through appropriate adjustment of these bounds, enables us to exploit limited information while providing a conservative characterization of demand uncertainty, which is consistent with the fact that demand data are often incomplete and of relatively lower quality. Mathematically, the ambiguity set for demand in cluster $s$ is represented as
 \begin{equation}\label{eq:17}
     \mathcal{P}^{s} = \left\{ \mathbb{P}^{s}: \boldsymbol{\gamma}^{Ls} \leq \mathbb{E}_{\mathbb{P}^{s}}\left[ \boldsymbol{\xi}^{s} \right] \leq \boldsymbol{\gamma}^{Us}  \right\} \ \mathrm{with} \ \ 
     \Xi^s=\{\boldsymbol{\xi}^{Ls} \leq \boldsymbol{\xi}^{s} \leq \boldsymbol{\xi}^{Us}\}.
\end{equation}
Based on the aforementioned characterization of uncertainty, the model in \eqref{eq:16} can be reduced to
 \begin{equation}\label{eq:18}
     \min_{\boldsymbol{X} \in \mathcal{X}} \boldsymbol{C}^\top_{\boldsymbol{X}}\boldsymbol{X} + \sum_{\mathcal{S}}q^{s}\max_{\mathbb{P}^{s} \in \mathcal{P}^{s}} \mathbb{E}_{\mathbb{P}^{s}} \left[ Q^{s}(\boldsymbol{X},\boldsymbol{\xi}^{s}) \right],
 \end{equation}
where $Q^{s}(\boldsymbol{X},\boldsymbol{\xi}^{s}) = \min_{\boldsymbol{Y} \in \mathcal{Y}^{s}(\boldsymbol{X},\boldsymbol{\xi}^{s})} \boldsymbol{C}_{\boldsymbol{Y}}^\top \boldsymbol{Y}^{s}$ with
 \begin{equation}\label{eq:19}
    \mathcal{Y}^{s}(\boldsymbol{X},\boldsymbol{\xi}^{s}) = \left\{\boldsymbol{Y}^{s}:\boldsymbol{B}^{s}_{\boldsymbol{X}}\boldsymbol{X}+\boldsymbol{B}_{\boldsymbol{Y}}\boldsymbol{Y}^{s}+\boldsymbol{B}_{\boldsymbol{\xi}}\boldsymbol{\xi}^{s} \geq \boldsymbol{d}, \boldsymbol{Y}^{s} \geq \boldsymbol{0}  \right\}.
 \end{equation}
Overall, by reducing the original joint uncertainty characterization to multiple independent ambiguity sets, with each of them corresponding to a climate cluster, as shown in Figure~\ref{fig:Reformulation}, we eliminate the need for a complex structure to describe joint stochasticity and instead introduce parallel clusters. 
The resulting model integrates the core structures of both SP and DRO, balancing SP’s reliance on empirical data for practicality and DRO’s handling of distributional ambiguity for robustness. 
Consequently, we can leverage reliable information about climatic variability and also hedge against less-understood randomness in product demand. 
Note that our treatment is also closely related to \citet{hao_robust_2020} and \citet{wang_featuredriven_2023}, which adopt feature-driven mechanisms to characterize uncertainty in a cluster-wise manner.

\begin{figure}[!ht]
        \centering
    \caption{The Model Reduction with Data-Driven Clustering}
 \includegraphics[trim = 0 100 0 0, scale=.12]{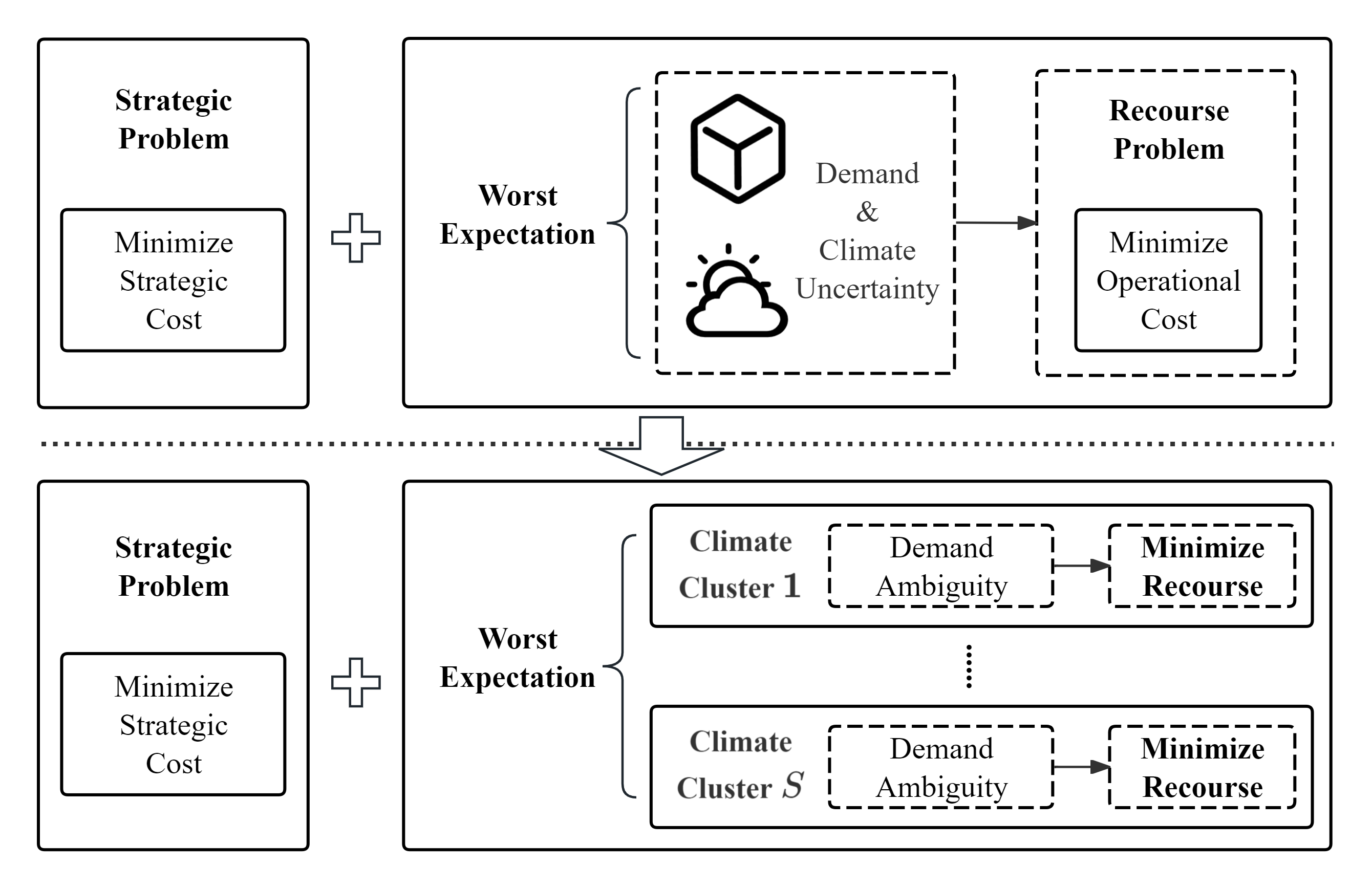}       \label{fig:Reformulation}
\end{figure}

\subsection{Model Reformulations and Structural Properties}
For a given first-stage decision $\boldsymbol{X}^{*}$, each parallel worst-case expected second-stage problem (WESP) corresponds to one climate cluster $s$ and aims to compute the worst-case expected recourse cost under the fixed climate regime of cluster $s$. 
To distinguish this problem from the feasibility check problem introduced later, we refer to WESP as O-WESP in what follows,
 \begin{equation}\label{eq:20}
     \text{O-WESP}:      w^{s}(\mathcal{P}^{s};\boldsymbol{X}^*) :=\max_{\mathbb{P}^{s} \in \mathcal{P}^{s}} \mathbb{E}_{\mathbb{P}^{s}} \left[ Q^{s}(\boldsymbol{X}^{*},\boldsymbol{\xi}^{s}) \right].
 \end{equation}

The next result is rather straightforward. Yet, it implies that a lower bound on the O-WESP can be derived by designing an ambiguity set that satisfies the conditions in \eqref{eq:17}.

\begin{proposition} \label{Proposition:A1}
    Consider any set $\hat{\mathcal{P}}$ such that $\hat{\mathcal{P}} \subseteq \mathcal{P}^{s}$ for cluster $s$, there is $w^{s}(\hat{\mathcal{P}};\boldsymbol X^{*}) \leq w^{s}(\mathcal{P}^{s};\boldsymbol X^{*})$.
\end{proposition}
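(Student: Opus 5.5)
The claim is a monotonicity property of the supremum over nested feasible sets, so the plan is to argue directly from the definition of O-WESP in \eqref{eq:20}, applied with $\hat{\mathcal{P}}$ in place of $\mathcal{P}^{s}$. Fix the first-stage decision $\boldsymbol X^{*}$ and cluster $s$. Define the functional $F(\mathbb{P}) := \mathbb{E}_{\mathbb{P}}[Q^{s}(\boldsymbol X^{*},\boldsymbol\xi^{s})]$. Its value depends only on $\mathbb{P}$ and not on which ambiguity set $\mathbb{P}$ is taken from. Then $w^{s}(\hat{\mathcal P};\boldsymbol X^*) = \sup_{\mathbb P\in\hat{\mathcal P}} F(\mathbb P)$ and $w^{s}(\mathcal P^{s};\boldsymbol X^*) = \sup_{\mathbb P\in\mathcal P^{s}} F(\mathbb P)$. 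I read the ``max'' as a supremum, so that no attainment issue arises.

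The key step is elementwise. For any $\mathbb{P}\in\hat{\mathcal P}$, the inclusion $\hat{\mathcal P}\subseteq\mathcal P^{s}$ gives $\mathbb P\in\mathcal P^{s}$. Hence $F(\mathbb P)\le \sup_{\mathbb P'\in\mathcal P^{s}}F(\mathbb P') = w^{s}(\mathcal P^{s};\boldsymbol X^*)$. Thus $w^{s}(\mathcal P^{s};\boldsymbol X^*)$ is an upper bound for $F$ on $\hat{\mathcal P}$. Taking the supremum over $\mathbb P\in\hat{\mathcal P}$ yields the claim. If $\hat{\mathcal P}=\emptyset$, the left side is $-\infty$ by convention and the inequality holds trivially.

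The only point needing care is that $F(\mathbb P)$ is well defined, i.e., finite, for every $\mathbb P\in\mathcal P^{s}$. I would verify this from the structure of the recourse problem \eqref{eq:14}. Every $\mathbb P\in\mathcal P^{s}$ is supported on the bounded box $\Xi^{s}$ of \eqref{eq:17}. For fixed $\boldsymbol X^{*}$, $Q^{s}(\boldsymbol X^{*},\cdot)$ is the optimal value of an LP whose right-hand side depends affinely on $\boldsymbol\xi^{s}$. It is therefore piecewise linear and convex on its domain, and bounded below by $0$ because all costs are nonnegative and $\boldsymbol Y\ge\boldsymbol 0$. It is bounded above on $\Xi^{s}$ whenever the recourse is feasible.

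Should infeasibility occur, with $Q^{s}=+\infty$ (e.g., the service-level constraint \eqref{eq:12} is violated), the convention $F(\mathbb P)=+\infty$ keeps the argument intact. In that case the right-hand side is $+\infty$ whenever the left side is. Hence the inequality holds in the extended reals, and the proof reduces to the elementwise comparison above.
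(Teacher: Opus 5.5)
Your argument is correct and matches the paper, which calls the result straightforward and gives no proof beyond the implicit observation that maximizing the same functional $\mathbb{E}_{\mathbb{P}}[Q^{s}(\boldsymbol{X}^{*},\boldsymbol{\xi}^{s})]$ over a subset $\hat{\mathcal{P}}\subseteq\mathcal{P}^{s}$ cannot give a larger value. Your finiteness discussion is not needed, since $Q^{s}\ge 0$ makes each expectation well defined in $[0,+\infty]$ and the comparison then holds in the extended reals, but it does no harm.
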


Following Lemma 3.1 from \citet{shapiro_duality_2001} and Proposition 5 from \citet{lu_two-stage_2024}, we can establish that the optimal distribution for the second-stage problem of model \eqref{eq:18} admits an optimal discrete probability distribution.
This key result allows us to reformulate \eqref{eq:20} into an equivalent yet simpler form based on a finite set of scenarios. 
Let $\Xi^{s*}$ denote the collection of optimal scenarios for the O-WESP corresponding to climate cluster $s$ for given $X^*$. 
Notably, for any given collection $\Xi^{s} \subseteq \Xi^{s*}$, this reformulated problem shares the same structure as a classical column generation master problem (O-CGMP), which enables the application of efficient CG algorithms to accurately solve O-WESP:
 \begin{equation}\label{eq:21}
    \begin{aligned}
        \text{O-CGMP}:\eta^{s}(\Xi^{s}) = \max &\sum_{\boldsymbol{\xi}^{s}\in\Xi^{s}} P_{\boldsymbol{\xi}^{s}}Q^{s}(\boldsymbol{X}^{*},\boldsymbol{\xi}^{s})\\
        \mathrm{s.t.}&\sum_{\boldsymbol{\xi}^{s}\in\Xi^{s}} P_{\boldsymbol{\xi}^{s}} = 1; \ \ P_{\boldsymbol{\xi}^{s}} \geq 0,\forall \boldsymbol{\xi}^{s}\in\Xi^{s};\\
        &\gamma^{Ls}_{kt} \leq \sum_{\boldsymbol{\xi}^{s}\in\Xi^{s}} P_{\boldsymbol{\xi}^{s}}\xi_{kt}^{s} \leq \gamma^{Us}_{kt}, \forall k \in \mathcal{K},\forall t \in \mathcal{T}.\\
    \end{aligned}
 \end{equation}

Let $\alpha^{s}$, $\beta^{Us}_{kt}$ and $\beta^{Ls}_{kt}$ be its dual variables, and $(\alpha^{s*},\boldsymbol{\beta}^{Us*},\boldsymbol{\beta}^{Ls*})$ denote the corresponding shadow prices.
In the CG framework, the pricing problem (PP) is designed to identify new columns that improve the objective value of the O-CGMP.
Specifically, to identify the probability variable with the largest reduced cost, the PP is formulated as a maximization problem that evaluates the potential contribution of each candidate scenario with respect to the optimality of \eqref{eq:21}.
Only scenarios with positive reduced costs are added to the O-CGMP, driving the algorithm toward convergence to the global optimum of \eqref{eq:20}.
 \begin{equation}\label{eq:22}
    \begin{aligned}
        \text{PP}:r^{s*} = \max_{\boldsymbol{\xi}^{s}} & \sum_{\mathcal{K}} \sum_{\mathcal{T}}(\beta^{Ls*}_{kt} - \beta^{Us*}_{kt})\xi_{kt}^{s} - \alpha^{s*} + Q^{s}(\boldsymbol{X}^{*},\boldsymbol{\xi}^{s})\\
        \mathrm{s.t.}&\,\xi^{Ls}_{kt} \leq \xi^{s}_{kt} \leq \xi^{Us}_{kt},\forall k \in \mathcal{K},\forall t \in \mathcal{T}.
    \end{aligned}
 \end{equation}

\begin{proposition} \label{Proposition:A2}
    For a given $\boldsymbol{X}^{*}$, suppose the second-stage recourse function $Q^{s}(\boldsymbol{X}^{*},\boldsymbol{\xi}^{s})$ is finite for $\forall \xi^s\in \Xi^{s*}$. Then, for the problem \eqref{eq:22}, we have one optimal solution that the optimal demand scenario $\xi^{s*}_{kt}$ takes one of the two boundary values, i.e., the lower bound $\xi^{Ls}_{kt}$ or the upper bound $\xi^{Us}_{kt}$  (see its proof in \ref{EC1}). 
\end{proposition}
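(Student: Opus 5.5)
The plan is to show that the objective of \eqref{eq:22} is convex in $\boldsymbol{\xi}^{s}$ over the box $\Xi^{s}$. A convex function attains its maximum over a polytope at a vertex. Since $\Xi^{s}$ is a box, its vertices are exactly the points with every coordinate $\xi^{s}_{kt}\in\{\xi^{Ls}_{kt},\xi^{Us}_{kt}\}$, which is the claim.

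The term $\sum_{k,t}(\beta^{Ls*}_{kt}-\beta^{Us*}_{kt})\xi^{s}_{kt}-\alpha^{s*}$ is affine in $\boldsymbol{\xi}^{s}$. So everything reduces to the convexity of $Q^{s}(\boldsymbol{X}^{*},\cdot)$ on $\Xi^{s}$.

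To get this convexity, I fix $\boldsymbol{X}^{*}$ and view $Q^{s}(\boldsymbol{X}^{*},\boldsymbol{\xi}^{s})$ as the value of the LP \eqref{eq:14} over $\mathcal{Y}^{s}(\boldsymbol{X}^{*},\boldsymbol{\xi}^{s})$ in \eqref{eq:19}. In that LP, $\boldsymbol{\xi}^{s}$ enters only the right-hand side, through $\boldsymbol{d}-\boldsymbol{B}^{s}_{\boldsymbol{X}}\boldsymbol{X}^{*}-\boldsymbol{B}_{\boldsymbol{\xi}}\boldsymbol{\xi}^{s}$. This holds for constraints \eqref{eq:11}, \eqref{eq:12}, and trivially for the others; the climate $\omega_s$ is fixed within cluster $s$, so \eqref{eq:13} has a constant right-hand side.

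I then take the LP dual,
\[
Q^{s}(\boldsymbol{X}^{*},\boldsymbol{\xi}^{s})=\max_{\boldsymbol{\pi}\ge \boldsymbol{0},\ \boldsymbol{B}_{\boldsymbol{Y}}^\top\boldsymbol{\pi}\le \boldsymbol{C}_{\boldsymbol{Y}}}\ \boldsymbol{\pi}^\top\big(\boldsymbol{d}-\boldsymbol{B}^{s}_{\boldsymbol{X}}\boldsymbol{X}^{*}-\boldsymbol{B}_{\boldsymbol{\xi}}\boldsymbol{\xi}^{s}\big),
\]
whose feasible region does not depend on $\boldsymbol{\xi}^{s}$. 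Strong duality is justified by the finiteness assumption in the statement, and the primal is bounded below because $\boldsymbol{C}_{\boldsymbol{Y}}\ge 0$ and $\boldsymbol{Y}\ge 0$. Hence $Q^{s}$ is a pointwise maximum of affine functions of $\boldsymbol{\xi}^{s}$. Equivalently, it is the maximum over the finitely many extreme points of the dual polyhedron, so it is convex and piecewise linear.

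The delicate point, and the expected main obstacle, is finiteness on the whole box rather than only on $\Xi^{s*}$. Because of the service-level constraint \eqref{eq:12} and the green penetration constraint \eqref{eq:10}, some $\boldsymbol{\xi}^{s}\in\Xi^{s}$ may make the primal infeasible, in which case $Q^{s}=+\infty$. I would handle this with the convention $Q^{s}=+\infty$ for infeasible $\boldsymbol{\xi}^{s}$. With this convention $Q^{s}$ is still the supremum of affine functions over the fixed dual feasible set, hence an extended-real convex function.

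Then there are two cases. If the maximum of \eqref{eq:22} is $+\infty$, it is attained at an infeasible point $\bar{\boldsymbol{\xi}}$. Since $\bar{\boldsymbol{\xi}}$ is a convex combination of vertices and the set of feasible $\boldsymbol{\xi}$ is convex (a projection of a polyhedron), some vertex must also be infeasible. That vertex is then also optimal. Otherwise $Q^{s}$ is finite on the box, which is the case the finiteness hypothesis is meant to guarantee (I read it as making the effective objective finite on $\Xi^{s}$). In that case the convex continuous objective attains its maximum on the compact polytope $\Xi^{s}$. Writing any maximizer as a convex combination of vertices, convexity gives that some vertex achieves at least the same value, so some vertex is optimal.

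This yields an optimal $\boldsymbol{\xi}^{s*}$ with each $\xi^{s*}_{kt}\in\{\xi^{Ls}_{kt},\xi^{Us}_{kt}\}$.
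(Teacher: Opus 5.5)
Your proof is correct and is essentially the paper's argument. Both dualize the recourse LP and use that the dual feasible set $\boldsymbol{\Pi}^{s}$ does not depend on $\boldsymbol{\xi}^{s}$: the paper fixes $\boldsymbol{\pi}^{s}$ and notes that what remains is an LP in $\boldsymbol{\xi}^{s}$ over a box, while you maximize over $\boldsymbol{\pi}^{s}$ first to get convexity of $Q^{s}(\boldsymbol{X}^{*},\cdot)$, which is the same bilinear structure read in the other order. Your explicit treatment of the case $Q^{s}=+\infty$ somewhere in $\Xi^{s}$, via convexity of the set of feasible $\boldsymbol{\xi}^{s}$, is a genuine improvement, since the paper's proof tacitly assumes its bilinear reformulation has a finite optimum.
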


It is noted that Proposition \ref{Proposition:A2} characterizes the boundary structure of critical scenarios in worst-case distributions, which plays a crucial role in  reducing the complexity of the data encoding/decoding process used in the generative AI approach introduced in the next section.
Furthermore, one can solve the equivalent form of \eqref{eq:22} given in \eqref{eq:ec1} directly using some commercial solvers, or by employing the Karush-Kuhn-Tucker (KKT) conditions to reformulate it into a linearized form (see \ref{EC2} for details).

\begin{proposition} \label{Proposition:A3}
    The second-stage recourse function $Q^{s}(\cdot)$ is a piecewise-linear monotonically non-decreasing function of $\xi_{kt}^{s}$ within the range $[\xi^{Ls}_{kt},\xi^{Us}_{kt}]$, $\forall k \in \mathcal{K}, \forall t \in \mathcal{T}$ (see its proof in \ref{EC1}).
\end{proposition}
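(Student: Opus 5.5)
We argue through LP duality for the second-stage problem. With $\boldsymbol{X}^*$ fixed, $Q^{s}(\boldsymbol{X}^{*},\boldsymbol{\xi}^{s})$ is the optimal value of a linear program in $\boldsymbol{Y}^{s}$ in which $\boldsymbol{\xi}^{s}$ appears only in the right-hand side. The relevant constraints are the demand balance \eqref{eq:11} and the service level constraint \eqref{eq:12}. Constraint \eqref{eq:10} does not involve $\boldsymbol{\xi}$ directly.

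\emph{Piecewise linearity.} On the region where $Q^{s}$ is finite, which includes $[\xi^{Ls}_{kt},\xi^{Us}_{kt}]$ as assumed in Proposition \ref{Proposition:A2}, write the dual
\[
Q^{s}(\boldsymbol{X}^{*},\boldsymbol{\xi}^{s})=\max_{\boldsymbol{\pi}\in\Pi}\ \boldsymbol{\pi}^\top\bigl(\boldsymbol{d}-\boldsymbol{B}^{s}_{\boldsymbol{X}}\boldsymbol{X}^{*}-\boldsymbol{B}_{\boldsymbol{\xi}}\boldsymbol{\xi}^{s}\bigr).
\]
The dual polyhedron $\Pi$ does not depend on $\boldsymbol{\xi}^{s}$. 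The maximum is therefore attained at one of finitely many extreme points of $\Pi$. This makes $Q^{s}$ a pointwise maximum of finitely many affine functions of $\boldsymbol{\xi}^{s}$, hence convex and piecewise linear. Restricting to one coordinate $\xi^{s}_{kt}$, with the others fixed, keeps both properties.

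\emph{Monotonicity.} For this we use a direct primal argument, which is cleaner than tracking dual signs. Take $\boldsymbol{\xi}^{s}$ and $\boldsymbol{\xi}'^{s}$ that differ only in coordinate $(k,t)$, with $\xi'_{kt}=\xi_{kt}+\delta$ and $\delta>0$. Let $\boldsymbol{Y}'$ be an optimal solution at $\boldsymbol{\xi}'^{s}$. We build a feasible solution at $\boldsymbol{\xi}^{s}$ that costs no more, which gives $Q^{s}(\boldsymbol{\xi}^{s})\le Q^{s}(\boldsymbol{\xi}'^{s})$. The construction proceeds in steps.
\begin{enumerate}
\item First decrease $Y^{U}_{kt}$ by $\min(\delta, Y'^{U}_{kt})$. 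This lowers cost because $c^{U}_k\ge 0$.
\item If some reduction $\delta_1$ remains, reduce production of product $k$ in period $t$ by $\delta_1$ in total. Take it preferably from traditional quantities $Y^{OT}$ and $Y^{NT}$, and take from $Y^{NG}$ only if needed.
\item Capacity constraints \eqref{eq:8}, \eqref{eq:9} and PV constraint \eqref{eq:13} only get looser when production decreases. Cost falls because $c^{O},c^{N}\ge 0$.
\item The service level constraint \eqref{eq:12} must hold with the smaller right-hand side $(1-\lambda)\xi_{kt}$. If we only reduced production, $Y^{U}_{kt}$ is unchanged and already satisfies it. If we reduced $Y^{U}_{kt}$ by $\delta$, the new value $Y'^{U}_{kt}-\delta$ is at most $(1-\lambda)\xi'_{kt}-\delta\le(1-\lambda)\xi_{kt}$, since $\lambda\in[0,1]$.
\end{enumerate}

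The main obstacle is the green penetration constraint \eqref{eq:10}. Removing traditional output never hurts the green ratio. Removing green output $Y^{NG}$, however, could violate it.

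\begin{itemize}
\item The remedy we try first is to reduce all three production types proportionally. Write $G$ for total green output and $T$ for total output over the horizon, so $G\ge\tau T$ before the change. Removing $\epsilon_G$ green and $\epsilon_T$ total output with $\epsilon_G=\rho\,\epsilon_T$, where $\rho\ge\tau$ is the green share of the removed output, gives $G-\epsilon_G\ge\tau(T-\epsilon_T)$. This requires the reduced cell to carry a green share of at least $\tau$.
\item If that fails, we can instead remove more traditional output elsewhere, or reduce unmet demand, to keep \eqref{eq:10} feasible.
\item Alternatively, note that the paper assumes $Q^{s}$ finite on the box. We can then argue via the dual: the multiplier of the demand equality \eqref{eq:11} has the appropriate sign because the penalty $c^{U}_k\ge0$ dominates. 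This gives each affine piece a nonnegative slope in $\xi_{kt}$, which again yields monotone non-decrease.
\end{itemize}

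Piecewise linearity and monotone non-decrease together give the statement.
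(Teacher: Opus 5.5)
Your piecewise-linearity argument is the same as the paper's: the dual polyhedron $\boldsymbol{\Pi}^{s}$ does not depend on $\boldsymbol{\xi}^{s}$, so $Q^{s}$ is a maximum of finitely many affine functions. That part is fine.

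The gap is in monotonicity, exactly where you flagged it. When the part of the cut at cell $(k,t)$ not absorbed by unmet demand has to come out of $Y^{NG}$, none of your remedies works.

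\emph{Proportional cut.} The inequality is reversed. Removing output with green share $\rho$ preserves \eqref{eq:10} when $\rho\le\tau$, not when $\rho\ge\tau$. In the problematic case the cell's output is mostly green, so $\rho>\tau$.

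\emph{Cutting elsewhere.} Removing traditional output in another cell can violate \eqref{eq:12} there, since that cell may already produce exactly $\lambda\xi_{k't'}$. Reducing unmet demand elsewhere means adding production, which raises cost, so the cost comparison is lost.

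\emph{Dual sign.} The sign claim in your last bullet is asserted, not derived, and it is wrong. Take one factory with ample line capacity and PV, a single period, and two products. Product $k_1$ can be made only on a green line, at unit cost $1$. Product $k_2$ costs $11$ on that green line and $1$ on a conventional line. Let $c^{U}_{k_1}=c^{U}_{k_2}=0.5$, $\lambda=0.5$, $\tau=0.1$ and $\xi_{k_2}=100$.

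For every $\xi_{k_1}\in[1,5]$, the optimal recourse makes all of $k_1$ green. It produces exactly $50$ units of $k_2$, of which $5-0.9\,\xi_{k_1}$ are green. This gives $Q^{s}=125-8\,\xi_{k_1}$, which is finite on the whole interval and strictly decreasing, even though every production cost exceeds every penalty. A higher demand for $k_1$ lets cheap green output of $k_1$ replace expensive green output of $k_2$ in \eqref{eq:10}. So no argument closes this step without an extra hypothesis.

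The paper argues monotonicity by a different route. It eliminates $Y^{U}_{kt}$ through \eqref{eq:11}, assumes $c^{O}_{ijk},c^{N}_{ijk}\ge c^{U}_{k}$, and says a larger $\xi_{kt}$ only tightens \eqref{eq:ec2} while all remaining variables have nonnegative costs. That step overlooks the constraint $\sum_{\mathcal{I},\mathcal{J}}(Y^{OT}_{ijkt}+Y^{NT}_{ijkt}+Y^{NG}_{ijkt})\le\xi_{kt}$ inherited from $Y^{U}_{kt}\ge 0$, which loosens as $\xi_{kt}$ grows. Its interaction with \eqref{eq:10} is precisely your obstacle, so the paper's route does not supply the missing step either; the example above satisfies its cost assumption.

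Your primal construction is in one respect the stronger one: it needs only nonnegative costs, not $c^{O}_{ijk},c^{N}_{ijk}\ge c^{U}_{k}$. It is complete when $\tau=0$. More generally, it is complete whenever the cut at $(k,t)$ can be taken with green share at most $\tau$, for example entirely from traditional output. A hypothesis of this kind must be added to the statement.

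Finally, Proposition~\ref{Proposition:A2} assumes finiteness only on $\Xi^{s*}$, not on the whole box. Without that assumption the same mechanism gives a second failure: if the green line cannot make $k_2$, then $Q^{s}=+\infty$ for $\xi_{k_1}<50/9$ and is finite above.
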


Following Propositions \ref{Proposition:A2}–\ref{Proposition:A3}, we next present another equivalent reformulation of the O-WESP problem through enumerating extreme points of $\Xi^s$.   
\begin{corollary} \label{Corollary:A1}
    Denote $\boldsymbol{\mathcal{Z}}:=\{ 0,1 \}^{|\mathcal{K}|\times|\mathcal{T}|}$, the O-WESP problem is equivalent to 
    \begin{equation}
        \begin{aligned}
            \max &\sum_{\boldsymbol{z}\in\boldsymbol{\mathcal{Z}}} P_{\boldsymbol{z}}Q^{s}(\boldsymbol{X}^{*},\boldsymbol{\xi}^{s}(\boldsymbol{z}))\\
            \mathrm{s.t.}&\sum_{\boldsymbol{z}\in\boldsymbol{\mathcal{Z}}} P_{\boldsymbol{z}} = 1; \ \ P_{\boldsymbol{z}} \geq 0,\forall \boldsymbol{z}\in\boldsymbol{\mathcal{Z}}; \  \ \gamma^{Ls}_{kt} \leq \sum_{\boldsymbol{z}\in\boldsymbol{\mathcal{Z}}} P_{\boldsymbol{z}}\xi_{kt}^{s}(\boldsymbol{z}) \leq \gamma^{Us}_{kt}, \forall k \in \mathcal{K},\forall t \in \mathcal{T},\\
        \end{aligned} \nonumber
    \end{equation}
    where $\boldsymbol{\xi}^{s}(\boldsymbol{z})=\boldsymbol{\xi}^{Ls}+\langle\boldsymbol{\xi}^{Us}-\boldsymbol{\xi}^{Ls},\boldsymbol{z}\rangle$.
\end{corollary}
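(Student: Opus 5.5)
The plan is to prove the two inequalities between the optimal value $w^{s}(\mathcal{P}^{s};\boldsymbol{X}^*)$ of O-WESP and the optimal value of the vertex-restricted problem in Corollary \ref{Corollary:A1}. Throughout, $\langle\cdot,\cdot\rangle$ is read as the componentwise product, so $\boldsymbol{\xi}^{s}(\boldsymbol{z})$ ranges exactly over the $2^{|\mathcal{K}||\mathcal{T}|}$ vertices of the box $\Xi^s$.

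\emph{The easy direction} ($\le$ for the vertex problem). Every feasible $(P_{\boldsymbol z})$ of the vertex problem defines a discrete distribution supported on $\Xi^s$. Its mean satisfies the bounds $\boldsymbol{\gamma}^{Ls}\le\mathbb{E}[\boldsymbol\xi^s]\le\boldsymbol{\gamma}^{Us}$, so it lies in $\mathcal{P}^s$. Proposition \ref{Proposition:A1}, with $\hat{\mathcal P}$ the set of vertex-supported members of $\mathcal P^s$, then gives that the vertex problem is bounded above by O-WESP.

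\emph{The substantive direction} ($\ge$) uses convexity of $Q^{s}(\boldsymbol X^*,\cdot)$ rather than mere monotonicity. In \eqref{eq:19}, $\boldsymbol\xi^s$ enters only the right-hand side, affinely: constraints \eqref{eq:11}--\eqref{eq:12} are linear in $\boldsymbol\xi^s$ with $\boldsymbol Y$ fixed. Hence the graph of the LP value function, $\{(\boldsymbol\xi,\boldsymbol Y):\boldsymbol Y\in\mathcal Y^s(\boldsymbol X^*,\boldsymbol\xi)\}$, is a polyhedron. Its projection-minimum is therefore convex in $\boldsymbol\xi^s$, and it is piecewise linear on its domain, consistent with Proposition \ref{Proposition:A3}. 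As in Proposition \ref{Proposition:A2}, I assume $Q^s$ is finite on $\Xi^s$; otherwise both problems are $+\infty$ and the claim is trivial.

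Next, for each $\boldsymbol\xi\in\Xi^s$, set $\theta_{kt}(\boldsymbol\xi)=(\xi_{kt}-\xi^{Ls}_{kt})/(\xi^{Us}_{kt}-\xi^{Ls}_{kt})\in[0,1]$, treating degenerate coordinates trivially. Define the product weights
\begin{equation}
\pi_{\boldsymbol z}(\boldsymbol\xi)=\prod_{k,t}\theta_{kt}(\boldsymbol\xi)^{z_{kt}}\bigl(1-\theta_{kt}(\boldsymbol\xi)\bigr)^{1-z_{kt}} . \nonumber
\end{equation}
These are nonnegative, sum to one, and satisfy $\sum_{\boldsymbol z}\pi_{\boldsymbol z}(\boldsymbol\xi)\,\boldsymbol\xi^s(\boldsymbol z)=\boldsymbol\xi$ coordinatewise. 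Jensen's inequality for the convex $Q^s$ then gives $Q^s(\boldsymbol X^*,\boldsymbol\xi)\le\sum_{\boldsymbol z}\pi_{\boldsymbol z}(\boldsymbol\xi)\,Q^s(\boldsymbol X^*,\boldsymbol\xi^s(\boldsymbol z))$.

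Given any $\mathbb P^s\in\mathcal P^s$, define $P_{\boldsymbol z}=\mathbb E_{\mathbb P^s}[\pi_{\boldsymbol z}(\boldsymbol\xi)]$. The $\pi_{\boldsymbol z}$ are continuous polynomials in $\boldsymbol\xi$ on a compact box, so measurability and the interchange of sum and expectation (Fubini) are immediate. Then $\sum_{\boldsymbol z}P_{\boldsymbol z}=1$, and $\sum_{\boldsymbol z}P_{\boldsymbol z}\boldsymbol\xi^s(\boldsymbol z)=\mathbb E_{\mathbb P^s}[\boldsymbol\xi]\in[\boldsymbol\gamma^{Ls},\boldsymbol\gamma^{Us}]$, so $(P_{\boldsymbol z})$ is feasible for the vertex problem. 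Taking expectations in the Jensen bound yields $\mathbb E_{\mathbb P^s}[Q^s]\le\sum_{\boldsymbol z}P_{\boldsymbol z}Q^s(\boldsymbol X^*,\boldsymbol\xi^s(\boldsymbol z))$. Taking the supremum over $\mathbb P^s$ gives the reverse inequality, so the two optimal values coincide. Moreover, an optimal vertex distribution is an optimal worst-case distribution.

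The main obstacle is the convexity claim. Proposition \ref{Proposition:A3} only states piecewise-linearity and monotonicity, and monotonicity alone does not allow mass to be pushed to vertices while preserving the mean. I would therefore prove convexity directly from the LP structure: either via the dual representation $Q^s=\max_{\boldsymbol\mu}\,\boldsymbol\mu^\top(\boldsymbol d-\boldsymbol B^s_{\boldsymbol X}\boldsymbol X^*-\boldsymbol B_{\boldsymbol\xi}\boldsymbol\xi)$ over a dual polyhedron independent of $\boldsymbol\xi$, which is a maximum of affine functions, or via the polyhedral-graph argument above.

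As a consistency check, an alternative CG-based route would also work. By Proposition \ref{Proposition:A2}, the pricing problem \eqref{eq:22} always has a vertex optimizer. So CG started from and restricted to vertex columns terminates with nonpositive reduced cost over all of $\Xi^s$, which certifies optimality of the restricted master for O-WESP.
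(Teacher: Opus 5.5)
Your proof is correct, and its main line differs from the paper's. The paper gives no standalone proof of Corollary~\ref{Corollary:A1}. It reads the result off Propositions~\ref{Proposition:A2}--\ref{Proposition:A3}, after using imported finite-support results (Shapiro 2001; Lu et al.\ 2024) to rewrite O-WESP as the column-generation master \eqref{eq:21}. Since the pricing problem \eqref{eq:22} always has a vertex optimizer (Proposition~\ref{Proposition:A2}, via the bilinear dual form \eqref{eq:ec1}), the vertices of $\Xi^s$ form a sufficient column pool, so the master over all $\boldsymbol z\in\boldsymbol{\mathcal Z}$ is exact. That is essentially your closing ``consistency check.'' As you set it up, it needs only weak duality: the terminal prices give an affine majorant $\alpha^{s*}+(\boldsymbol\beta^{Us*}-\boldsymbol\beta^{Ls*})^\top\boldsymbol\xi^s\ge Q^s(\boldsymbol X^*,\boldsymbol\xi^s)$ on the whole box.

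Your main route is instead primal and constructive. Convexity of $Q^s(\boldsymbol X^*,\cdot)$ plus the product-Bernoulli splitting sends every $\mathbb P^s\in\mathcal P^s$, discrete or not, to a vertex-supported distribution with exactly the same mean and no smaller expected recourse. This buys several things:
\begin{itemize}
\item It is self-contained: no semi-infinite duality or finite-support theorem is needed, and you get an explicit worst-case-preserving map.
\item It isolates what actually matters. The first ingredient is convexity in the right-hand-side parameter. The paper's proof of Proposition~\ref{Proposition:A3} does establish this as a maximum of affine functions, though the statement records only piecewise linearity and monotonicity.
\item The second ingredient is that $\mathcal P^s$ constrains only first moments, which your splitting preserves exactly. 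It would not preserve, e.g., second-moment constraints.
\item You are right that the monotonicity in Proposition~\ref{Proposition:A3}, and the cost assumption $c^{O}_{ijk},c^{N}_{ijk}\ge c^{U}_{k}$ behind it, plays no role.
\end{itemize}
The paper's route, for its part, plugs directly into Algorithm~1. It justifies running CG over vertex columns only, underlies the termination bound in Proposition~\ref{Proposition:A5}, and motivates the binary encoding of Section~5.

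One small correction concerns your aside that if $Q^s$ is not finite on $\Xi^s$ then both problems equal $+\infty$. This can fail when the moment bounds force $\mathbb E[\xi^s_{kt}]$ onto an endpoint of $[\xi^{Ls}_{kt},\xi^{Us}_{kt}]$. Then no member of $\mathcal P^s$ charges the offending vertices, and both optimal values may be finite. The caveat is unnecessary anyway. Your Jensen step holds verbatim for the extended-valued convex function $Q^s$ (bounded below by $0$) with the convention $0\cdot(+\infty)=0$. So the equivalence holds without any finiteness assumption.
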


Corollary~\ref{Corollary:A1} shows that O-WESP can, in principle, be formulated as a bilevel max-min linear optimization problem. Through an optimality conditions–based reformulation, this bilevel problem can be converted into a single-level formulation that is theoretically solvable by some commercial solvers. 
However, such a direct solution approach is computationally intractable due to a severe dimensionality explosion. In particular, the cardinality of set $\boldsymbol{\mathcal{Z}}$ grows exponentially with $|\mathcal{K}|\times|\mathcal{T}|$.
Hence, even for moderate-sized values of $|\mathcal{K}|$ and $|\mathcal{T}|$, the resulting formulation involves an extremely large number of lower-level problems. This issue is further exacerbated by the complexity of those lower-level problems, which are real-life capacity planning problems. Hence, solving such an  enumeration-based formulation to optimality is practically infeasible.

To address the challenge of infeasible recourse, a similar reformulation is required.
For a given $\boldsymbol{X}^{*}$ rendering the recourse problem infeasible under some $\boldsymbol{\xi}^{s}$, we have $\mathcal{Y}^{s}(\boldsymbol{X}^{*},\boldsymbol{\xi}^{s}) = \emptyset$.  
By introducing auxiliary variables $\widetilde{\boldsymbol{Y}}^{s}$, we construct a new recourse problem as 
 \begin{equation}
    \begin{aligned}
        \widetilde{Q}^{s}_{f}(\boldsymbol{X}^{*},\boldsymbol{\xi}^{s}) = &\min_{\boldsymbol{Y},\widetilde{\boldsymbol{Y}}^{s} \in \widetilde{\mathcal{Y}}^{s}(\boldsymbol{X}^{*},\boldsymbol{\xi}^{s})} \boldsymbol{1}^{\top} \widetilde{\boldsymbol{Y}}^{s},
    \end{aligned} \nonumber
 \end{equation}
where $\widetilde{\mathcal{Y}}^{s}(\boldsymbol{X}^{*},\boldsymbol{\xi}^{s}) = \left\{(\boldsymbol{Y},\widetilde{\boldsymbol{Y}}^{s}):\boldsymbol{B}^{s}_{\boldsymbol{X}}\boldsymbol{X}^{*}+\boldsymbol{B}_{\boldsymbol{Y}}\boldsymbol{Y}^{s}+\boldsymbol{B}_{\boldsymbol{\xi}}\boldsymbol{\xi}^{s}+\widetilde{\boldsymbol{Y}}^{s} \geq \boldsymbol{d}, \boldsymbol{Y} \geq \boldsymbol{0},\widetilde{\boldsymbol{Y}}^{s} \geq \boldsymbol{0}  \right\}$.

Then, the feasibility check for the worst-case expected problems (F-WESP) can be formulated as
 \begin{equation}
      \text{F-WESP}:\widetilde{w}^{s*} = \max_{\mathbb{P}^{s} \in \mathcal{P}^{s}} \mathbb{E}_{\mathbb{P}^{s}} \left[ \widetilde{Q}^{s}_{f}(\boldsymbol{X}^{*},\boldsymbol{\xi}^{s}) \right]. \nonumber
 \end{equation}

\begin{proposition} \label{Proposition:A4}
    Given a first-stage solution $\boldsymbol{X}^{*}$, $\boldsymbol{X}^{*}$ is feasible for \textup{O-WESP} if and only if the optimal value of \textup{F-WESP} satisfies $\widetilde{w}^{s*}=0$. Moreover, if $\widetilde{w}^{s*}>0$, then $\boldsymbol{X}^{*}$ is infeasible for some $ \boldsymbol{\xi}^{s}\in \Xi^s$ that has a positive probability under a distribution in $\mathcal P^s$ (see its proof in \ref{EC1}).
\end{proposition}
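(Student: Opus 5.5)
The plan is to work pointwise first and then lift to the worst-case expectation. For fixed $\boldsymbol{X}^{*}$ and any $\boldsymbol{\xi}^{s}\in\Xi^s$, the problem defining $\widetilde{Q}^{s}_{f}(\boldsymbol{X}^{*},\boldsymbol{\xi}^{s})$ is a feasible LP. For instance, take $\boldsymbol{Y}^s=\boldsymbol{0}$ and $\widetilde{\boldsymbol{Y}}^{s}=\max\{\boldsymbol{0},\boldsymbol{d}-\boldsymbol{B}^{s}_{\boldsymbol{X}}\boldsymbol{X}^{*}-\boldsymbol{B}_{\boldsymbol{\xi}}\boldsymbol{\xi}^{s}\}$ componentwise. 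Its objective $\boldsymbol{1}^\top\widetilde{\boldsymbol{Y}}^{s}$ is bounded below by $0$, so the LP has a finite optimal value $\widetilde{Q}^{s}_{f}\ge 0$.

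The key pointwise equivalence is then
\[
\widetilde{Q}^{s}_{f}(\boldsymbol{X}^{*},\boldsymbol{\xi}^{s})=0 \iff \mathcal{Y}^{s}(\boldsymbol{X}^{*},\boldsymbol{\xi}^{s})\neq\emptyset .
\]
For the forward direction, an optimal pair with $\boldsymbol{1}^\top\widetilde{\boldsymbol{Y}}^{s}=0$ and $\widetilde{\boldsymbol{Y}}^{s}\ge\boldsymbol{0}$ forces $\widetilde{\boldsymbol{Y}}^{s}=\boldsymbol{0}$, so its $\boldsymbol{Y}^s$ lies in $\mathcal{Y}^s$. Attainment of the optimum holds because a feasible LP bounded below attains its minimum. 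The converse is immediate: pair any $\boldsymbol{Y}^s\in\mathcal{Y}^s$ with $\widetilde{\boldsymbol{Y}}^{s}=\boldsymbol{0}$. In addition, $\widetilde{Q}^{s}_{f}(\boldsymbol{X}^{*},\cdot)$ is the optimal value of an LP whose right-hand side is affine in $\boldsymbol{\xi}^{s}$. It is therefore convex and piecewise linear, in particular continuous on the compact box $\Xi^s$, so all expectations are well defined. Here I will also note that feasibility of $\boldsymbol{X}^{*}$ for O-WESP means $\mathcal{Y}^s(\boldsymbol{X}^*,\boldsymbol{\xi}^s)\ne\emptyset$ for every $\boldsymbol{\xi}^s\in\Xi^s$, i.e., $Q^s$ is finite on the whole support.

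Lifting to F-WESP, suppose first that $\boldsymbol{X}^{*}$ is feasible. Then $\widetilde{Q}^{s}_{f}\equiv 0$ on $\Xi^s$, so every expectation is $0$ and $\widetilde{w}^{s*}=0$. Conversely, suppose some $\hat{\boldsymbol{\xi}}\in\Xi^s$ has $\widetilde{Q}^{s}_{f}(\boldsymbol{X}^*,\hat{\boldsymbol{\xi}})=\delta>0$. I then need a distribution in $\mathcal{P}^s$ that puts positive mass on $\hat{\boldsymbol{\xi}}$. Assuming $\mathcal{P}^s$ is nonempty, pick any $\mathbb{P}_0\in\mathcal{P}^s$ with mean $\boldsymbol{\mu}_0\in[\boldsymbol{\gamma}^{Ls},\boldsymbol{\gamma}^{Us}]$.

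This step is the main obstacle, because the mean constraints may be binding. The natural construction is the mixture $\mathbb{P}_\epsilon=(1-\epsilon)\mathbb{P}_0+\epsilon\,\delta_{\hat{\boldsymbol{\xi}}}$, whose mean is $(1-\epsilon)\boldsymbol{\mu}_0+\epsilon\hat{\boldsymbol{\xi}}$. If $\boldsymbol{\mu}_0$ lies in the relative interior of the mean box, a small enough $\epsilon$ keeps this mean within the bounds. Otherwise, in each coordinate where a mean bound is tight, I must compensate. One option is to include the reflected extreme point (lower and upper bounds swapped coordinatewise where needed) with suitable weight, using the box structure of $\Xi^s$. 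Since $\Xi^s$ is a product of intervals, one can mix $\hat{\boldsymbol{\xi}}$ with points chosen independently per coordinate to restore each coordinate mean exactly.

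If exact restoration is impossible, for example when $\gamma^{Ls}_{kt}=\gamma^{Us}_{kt}=\xi^{Us}_{kt}$ with $\hat{\xi}_{kt}<\xi^{Us}_{kt}$, then $\hat{\boldsymbol{\xi}}$ is simply not in the support of any admissible distribution. In that case I will interpret ``feasible for O-WESP'' as feasibility on the union of supports of distributions in $\mathcal{P}^s$, which is the effective scenario set. With that convention the converse follows: $\widetilde{w}^{s*}\ge\mathbb{E}_{\mathbb{P}_\epsilon}[\widetilde{Q}^s_f]\ge\epsilon\delta>0$, using $\widetilde{Q}^s_f\ge0$ for the remaining mass.

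The second claim follows by contraposition. If $\widetilde{w}^{s*}>0$, some $\mathbb{P}\in\mathcal{P}^s$ has $\mathbb{E}_{\mathbb{P}}[\widetilde{Q}^s_f]>0$. Since $\widetilde{Q}^s_f\ge 0$, the set $\{\boldsymbol{\xi}^s:\widetilde{Q}^s_f>0\}$ has positive $\mathbb{P}$-probability. Invoking the discrete optimal distribution result (Lemma 3.1 of \citet{shapiro_duality_2001} as used for \eqref{eq:21}), I can take $\mathbb{P}$ finitely supported, which yields an atom $\boldsymbol{\xi}^s$ with positive probability and $\widetilde{Q}^s_f>0$. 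By the pointwise equivalence, $\mathcal{Y}^s(\boldsymbol{X}^*,\boldsymbol{\xi}^s)=\emptyset$ at that atom, i.e., $\boldsymbol{X}^*$ is infeasible there.
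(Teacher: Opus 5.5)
Your proof is correct and more complete than the paper's, which argues informally in two steps.
\begin{itemize}
\item It asserts that $\widetilde{Q}^{s}_{f}$ measures the minimal total violation ``by construction'', and shows that feasibility on the supports of all $\mathbb{P}^s\in\mathcal{P}^s$ forces $\widetilde{w}^{s*}=0$.
\item Under ``conversely'', it shows that $\widetilde{w}^{s*}>0$ implies infeasibility for some realization.
\end{itemize}
That second step is only the contrapositive of the first, together with the ``moreover'' clause. So the paper never exhibits, for an infeasible scenario, an admissible distribution that charges it. That is what the implication $\widetilde{w}^{s*}=0\Rightarrow$ feasibility actually needs.

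You differ from the paper in three places:
\begin{itemize}
\item You prove the pointwise equivalence $\widetilde{Q}^{s}_{f}(\boldsymbol{X}^*,\boldsymbol{\xi}^s)=0\iff\mathcal{Y}^s(\boldsymbol{X}^*,\boldsymbol{\xi}^s)\neq\emptyset$ properly, covering feasibility, boundedness and attainment.
\item You supply the missing direction via the mixture $\mathbb{P}_\epsilon$, which gives $\widetilde{w}^{s*}\ge\epsilon\delta>0$.
\item You obtain an actual atom for the ``moreover'' clause from the finite-support reduction, where the paper only speaks of ``some realizations''.
\end{itemize}
The paper's version buys brevity; yours buys a genuine proof of both directions and a precise statement of scope.

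Your remark that the equivalence fails if feasibility is demanded on all of $\Xi^s$ is right. For instance, $\gamma^{Ls}_{kt}=\xi^{Us}_{kt}$ pins that coordinate, so an infeasible $\hat{\boldsymbol{\xi}}$ with $\hat{\xi}_{kt}<\xi^{Us}_{kt}$ is invisible to F-WESP. Your union-of-supports convention is exactly the notion the paper's proof uses implicitly.

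Two small tightenings:
\begin{itemize}
\item ``Relative interior'' should be ``interior'', since a coordinate with $\gamma^{Ls}_{kt}=\gamma^{Us}_{kt}$ defeats the small-$\epsilon$ argument.
\item The compensation step needs no reflected extreme points. Take $\epsilon\,\delta_{\hat{\boldsymbol{\xi}}}+(1-\epsilon)\,\delta_{\boldsymbol{\zeta}}$ with $\zeta_{kt}=(m_{kt}-\epsilon\hat{\xi}_{kt})/(1-\epsilon)$. Here choose $m_{kt}\in[\gamma^{Ls}_{kt},\gamma^{Us}_{kt}]\cap(\xi^{Ls}_{kt},\xi^{Us}_{kt})$ when this intersection is nonempty, and $m_{kt}=\hat{\xi}_{kt}$ in a pinned coordinate. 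Then $\boldsymbol{\zeta}\in\Xi^s$ for small $\epsilon$, and the construction succeeds exactly for the $\hat{\boldsymbol{\xi}}$ lying in the union of supports.
\end{itemize}
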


Note that the CG master and pricing problems of F-WESP share the same form as those of O-WESP in \eqref{eq:21} and \eqref{eq:22}, respectively. 
To minimize redundancy, they are provided in \ref{EC2}. 
Next, we present the complete description of the CG algorithm in Algorithm 1 for solving O-WESP. 
It can also be modified slightly to solve F-WESP according to its CG master and pricing problems.\\
\rule{\linewidth}{0.5pt}\vspace{-5pt}
\textbf{Algorithm 1}: CG approach for solving O-WESP\\
\textbf{1.} Given an initial feasible solution set $\Xi^{0}$ and an optimality tolerance $\epsilon$. \\
\textbf{2.} Solve O-CGMP with the set $\Xi^{0}$ and obtain its optimal value $\eta^{s}(\Xi^{0})$. Derive its shadow prices $(\alpha^{s*},\boldsymbol{\beta}^{Us*},\boldsymbol{\beta}^{Ls*})$.\\
\textbf{3.} Solve PP and identify the optimal $\boldsymbol{\xi}^{s*}$. Derive its optimal value $r^{s*}$ as the reduced cost. If $r^{s*} > 0$ go to next step. If $r^{s*} = 0$, go to step 5.\\
\textbf{4.} Add the new column $\boldsymbol{\xi}^{s*}$ into set $\Xi^{0}$ and go to step 2.\\
\textbf{5.} Terminate and let $w^{s*} = \eta^{s}(\Xi^{0})$ to be the optimal value of O-WESP. \vspace{-10pt}\\
\rule{\linewidth}{0.5pt}

\begin{proposition} \label{Proposition:A5}
  Assume $\epsilon=0$. Algorithm 1 terminates in a finite number of iterations, and upon termination  there exists at least one $(k,t)$ such that an optimal solution of O-CGMP satisfies (see its proof in \ref{EC1}):
     \begin{equation}\label{eq:23}
        \sum_{\boldsymbol{\xi}^{s}\in\Xi^{s*}} P_{\boldsymbol{\xi}^{s}}\xi_{kt}^{s} =\gamma^{Us}_{kt}.
     \end{equation}
\end{proposition}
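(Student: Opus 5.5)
The plan has two parts: finite termination of Algorithm 1, and existence of an optimal O-CGMP solution at termination for which at least one upper mean constraint is tight.

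\emph{Finite termination.} By Proposition \ref{Proposition:A2}, the pricing problem \eqref{eq:22} always has an optimal solution whose entries all lie at $\xi^{Ls}_{kt}$ or $\xi^{Us}_{kt}$. So we may assume that every column produced in Step 3 is of the form $\boldsymbol{\xi}^{s}(\boldsymbol{z})$ with $\boldsymbol{z}\in\boldsymbol{\mathcal{Z}}$, and there are at most $2^{|\mathcal{K}||\mathcal{T}|}$ such columns. Suppose a column $\boldsymbol{\xi}^{s}$ is already in $\Xi^{0}$. LP duality for \eqref{eq:21} gives $\alpha^{s*}+\sum_{k,t}(\beta^{Us*}_{kt}-\beta^{Ls*}_{kt})\xi^s_{kt}\ge Q^s(\boldsymbol X^*,\boldsymbol\xi^s)$, so its reduced cost is nonpositive. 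Hence every column added in Step 4 (which has $r^{s*}>0$) is new, and the algorithm stops after finitely many iterations. When it stops, $r^{s*}=0$, so the duals of O-CGMP are feasible for the dual of the full LP over $\boldsymbol{\mathcal{Z}}$ in Corollary \ref{Corollary:A1}. Therefore $\eta^s(\Xi^{0})=w^s(\mathcal{P}^s;\boldsymbol X^*)$.

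\emph{Tightness of an upper mean constraint.} Take an optimal distribution $\{P_{\boldsymbol\xi}\}$ from the final O-CGMP, and suppose that $\sum P_{\boldsymbol\xi}\xi_{kt}<\gamma^{Us}_{kt}$ for every $(k,t)$. Push every support point toward the upper corner: for $\theta\in[0,1]$ set $\boldsymbol\xi(\theta)=\boldsymbol\xi+\theta(\boldsymbol\xi^{Us}-\boldsymbol\xi)$. The points $\boldsymbol\xi(\theta)$ stay in $\Xi^s$, and each mean $m_{kt}(\theta)=\sum P_{\boldsymbol\xi}\xi_{kt}(\theta)$ is continuous and nondecreasing in $\theta$. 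So the lower mean bounds remain satisfied. By Proposition \ref{Proposition:A3}, applied coordinatewise, $Q^s(\boldsymbol X^*,\boldsymbol\xi(\theta))\ge Q^s(\boldsymbol X^*,\boldsymbol\xi)$, so the objective does not decrease. At $\theta=1$ every mean equals $\xi^{Us}_{kt}\ge\gamma^{Us}_{kt}$; this uses the standing assumption $\gamma^{Us}\le\xi^{Us}$, since otherwise the upper mean constraint is vacuous. Let $\theta^*$ be the smallest $\theta$ at which some $m_{kt}(\theta)$ reaches $\gamma^{Us}_{kt}$. At $\theta^*$ the distribution is feasible, and by optimality of $w^s$ its value is again exactly $w^s$, so it is optimal and satisfies \eqref{eq:23} for that $(k,t)$.

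\emph{Main obstacle.} The shifted points $\boldsymbol\xi(\theta^*)$ need not belong to the current column set. So the argument must identify $\Xi^{s*}$ with an optimal scenario collection rather than with the literal final $\Xi^0$. This identification is justified because termination certifies $\eta^s=w^s$: adding the shifted scenarios to the master leaves its optimal value unchanged, and the shifted distribution is an optimal solution of that master. If the statement must hold with extreme-point columns only, I would first try re-expanding each $\boldsymbol\xi(\theta^*)$ as a convex combination of vertices of $\Xi^s$ that preserves the means. This requires the objective not to drop, which is where concavity of $Q^s$ in $\boldsymbol\xi$ would be needed. That property fails in general for the min-cost recourse, so I would avoid this route and keep the optimal-collection interpretation.
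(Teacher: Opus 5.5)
Your proposal is correct and follows essentially the paper's argument. Finiteness comes from taking columns at the finitely many vertices of the box $\Xi^s$; you give a self-contained reduced-cost argument where the paper just cites Lu et al.\ (2024). Tightness comes from pushing the support of an optimal distribution toward $\boldsymbol{\xi}^{Us}$, using the monotonicity in Proposition~\ref{Proposition:A3} and an intermediate-value step. The paper moves only one coordinate $(k',t')$, so the other mean constraints are untouched and it needs no first-hitting argument. Like you, it ends with an optimal distribution on shifted scenarios rather than on the literal final column set.

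One correction to your closing aside. Re-expanding $\boldsymbol{\xi}(\theta^*)$ as a mean-preserving mixture of vertices needs convexity of $Q^s$ in $\boldsymbol{\xi}^s$ (Jensen's inequality), not concavity. Convexity does hold, by the max-of-affine-functions representation in the proof of Proposition~\ref{Proposition:A3}. So the route you discarded actually works and gives an optimal distribution supported on extreme points of $\Xi^s$.
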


It is worth mentioning that Propositions \ref{Proposition:A1}--\ref{Proposition:A5} collectively reveal structural properties of the worst-case distribution for the O-WESP. 
They serve as a theoretical foundation for two subsequent methodological developments:
(1) the data encoding/decoding process focusing on the boundaries of optimal scenarios, and 
(2) the application of generative AI to enable targeted learning of distributional characteristics and to accelerate the scenario generation process. 
These developments are described in the next two sections.

\section{Solving DRO by An AI-Enhanced Decomposition Algorithm}
As noted earlier, solving general two-stage DRO problems at real-world scale is computationally highly challenging. 
Currently, only a limited number of practical applications have been investigated using two-stage DRO models. 
Although the C\&CG-DRO algorithm \citep{lu_two-stage_2024} demonstrates a strong computational capacity, it can be very time-consuming as an iterative procedure. 
We note that this algorithm provides a flexible framework that allows for the integration of user-designed subroutines (e.g., machine learning or AI-based methods) without compromising solution accuracy. 
Therefore, instead of implementing the standard version, we next present a customized C\&CG-DRO algorithm with a novel generative AI-enhanced scheme that significantly improves our computational efficiency and algorithmic consistency.

\subsection{The Customized C\&CG-DRO Algorithm}
Solving the two-stage DRO problem relies on integrating the basic C\&CG framework \citep{zeng_solving_2013} with Algorithm 1. 
The core idea is to construct a master problem (MP), a relaxation of \eqref{eq:18}, to derive a valid lower bound. 
An upper bound can be derived by solving the O-WESP for a given first-stage decision $\boldsymbol{X}^{*}$. 
Feasibility cuts and optimality cuts are then generated using the scenarios produced by Algorithm 1. Let $\Xi^{s(o)}$ and $\Xi^{s(f)}$ denote the collections of scenarios associated with optimality cuts and feasibility cuts, respectively, that are generated for cluster $s$ when solving O-WESP and F-WESP. The MP is given by
 \begin{subequations}
    \begin{align}
        \text{MP}:\underline{g} = \min &\, \boldsymbol{C}_{\boldsymbol{X}}^\top \boldsymbol{X} + \sum_{\mathcal{S}} q_{s}\eta_{s} \label{eq:24a}\\
        \mathrm{s.t.}&\, \boldsymbol{X} \in \mathcal{X};\label{eq:24b}\\
        &\eta_{s} \geq \max \sum_{\boldsymbol{\xi}^{s}\in\Xi^{s(o)}} P^{(o)}_{\boldsymbol{\xi}^{s}}Q^{s}(\boldsymbol{X},\boldsymbol{\xi}^{s}),\forall s \in \mathcal{S}; \ \  \boldsymbol{Y}^{s}_{\boldsymbol{\xi}^{s}} \in \mathcal{Y}^{s}(\boldsymbol{X},\boldsymbol{\xi}^{s}),\forall s \in \mathcal{S},\forall \boldsymbol{\xi}^{s} \in \Xi^{s(o)};\label{eq:24c}\\
        &0 \geq \max \sum_{\boldsymbol{\xi}^{s}\in\Xi^{s(f)}} P^{(f)}_{\boldsymbol{\xi}^{s}}\widetilde{Q}_{f}^{s}(\boldsymbol{X},\boldsymbol{\xi}^{s}),\forall s \in \mathcal{S}; \ \ 
     \boldsymbol{Y}^{s}_{\boldsymbol{\xi}^{s}},\widetilde{\boldsymbol{Y}}^{s}_{\boldsymbol{\xi}^{s}} \in \widetilde{\mathcal{Y}}^{s}(\boldsymbol{X},\boldsymbol{\xi}^{s}),\forall s \in \mathcal{S},\forall \boldsymbol{\xi}^{s} \in \Xi^{s(f)},\label{eq:24d}
    \end{align}
 \end{subequations}
where the two embedded maximization problems are LPs, with $P^{(o)}_{\xi^s}$ and $P^{(f)}_{\xi^s}$ as the decision variables. 
MP can be equivalently reformulated into a mixed-integer linear program, shown in \ref{EC2}.

If MP is infeasible, then the DRO problem \eqref{eq:18} is also infeasible, referred to as DRO infeasibility. 
When implementing C\&CG-DRO, it is also worth mentioning that the independence among the scenario sets $\Xi^{s(o)}$ (and $\Xi^{s(f)}$) naturally enables parallel computation of WESP problems.  
Next, we present an algorithmic procedure for solving the full problem in Algorithm 2, where the upper and lower bounds of MP are denoted as $UB$ and $LB$ respectively, and the tolerance is denoted as $TOL$.\\
\rule{\linewidth}{0.5pt}\vspace{-5pt}
\textbf{Algorithm 2}: C\&CG-DRO algorithm\\
\textbf{1.} Set $UB = +\infty$, $LB = -\infty$, $TOL = 10^{-5}$ and $\Xi^{s(o)} = \emptyset$, $\Xi^{s(f)} = \emptyset$ for cluster $s$, $s\in \mathcal S$.\\
\textbf{2.} Solve MP. If MP is infeasible, terminate and report the two-stage DRO problem is infeasible. If it is unbounded, get a feasible solution $\boldsymbol{X}^{*}$. Otherwise, get its optimal solution $\boldsymbol{X}^{*}$ and value $\underline{g}$, let $LB = \underline{g}$.\\
\textbf{3.} For every cluster $s\in \mathcal S$, solve F-WESP with its optimal value $\widetilde{w}^{s*}$. \\
\hspace*{2em} \textbf{(3-a)} If $\widetilde{w}^{s*} > 0$, set $w^{s*} = +\infty$. Derive all scenarios and their associated probabilities, let $\Xi^{s(f)*}$ be the set of scenarios that the associated probabilities are greater than $0$. Update  $\Xi^{s(f)} = \Xi^{s(f)*}\cup\Xi^{s(f)}$.\\
\hspace*{2em} \textbf{(3-b)} If $\widetilde{w}^{s*} = 0$, solve O-WESP for cluster $s$, with its optimal value $w^{s*}$ and $\Xi^{s(o)*}$, the set of scenarios that the associated probabilities are greater than $0$. Update $\Xi^{s(o)} = \Xi^{s(o)*}\cup\Xi^{s(o)}$.\\
\textbf{4.} With updated $\Xi^{s(o)}$ and  $\Xi^{s(o)}$ for $s\in \mathcal S$, add the corresponding constraints and variables into MP. Set $UB = \min \left \{ UB,\boldsymbol{C}_{\boldsymbol{X}}^\top \boldsymbol{X}^{*} + \sum_{\mathcal{S}} q_{s}w^{s*} \right \}$.\\
\textbf{5.} If $ UB-LB\leq TOL$, terminate and output optimal solution $\boldsymbol{X}^{*}$. Otherwise go to step 2. \vspace{-10pt}\\
\rule{\linewidth}{0.5pt}

We note that the C\&CG-DRO algorithm exhibits a nested structure.
As shown in Step~3, the solution of the F-WESP and O-WESP relies on the CG procedures presented in Algorithm 1. 
It has been established that effective initialization of columns can substantially reduce the computational time required by Algorithm 1.  
One straightforward approach is to reuse the scenarios in the sets $\Xi^{s(f)}$ and $\Xi^{s(o)}$ obtained from a previous iteration as the initial columns. 
Moreover, by the construction of the O-CGMP and F-CGMP, non-optimal scenarios are assigned zero probability upon termination of Algorithm 1. It suggests that initializing Algorithm 1 with any set of scenarios, provided they belong to the underlying sample space, does not affect its convergence and therefore preserves the correctness of the overall C\&CG-DRO algorithm.

\subsection{An AI-Enhanced C\&CG-DRO Through GANs}
Among various AI approaches, the generative adversarial network (GAN) has emerged as a powerful framework for learning complex data patterns and generating high-quality synthetic samples. 
Standard GANs are known to suffer from training instabilities, including mode collapse and sensitivity to hyperparameter selection. 
To overcome these limitations, the Wasserstein generative adversarial network with gradient penalty (WGAN-GP) has been proposed as an improved variant of GAN. 
By enforcing a Lipschitz constraint through a gradient penalty, WGAN-GP significantly improves training stability and convergence behavior \citep{gulrajani_improved_2017}.

In our study, we adopt a conditional WGAN-GP framework to learn the mapping between first-stage decisions and the supporting scenarios of the worst-case distribution.
Specifically, a conditional architecture is introduced to incorporate the evolving first-stage decisions and related parameters as conditioning information, which allows the generator to produce scenarios tailored to each iteration of the C\&CG-DRO algorithm. 
The WGAN-GP configuration is employed for training to ensure training stability and to accurately capture the distributional structure of worst-case scenarios.
Given the iterative nature of the C\&CG-DRO process, under which scenario distributions change dynamically across iterations, this conditional WGAN-GP design enables adaptive and consistent scenario generation throughout the algorithm, thereby enhancing both computational efficiency and solution quality.

Note that, conventionally, GANs (including variants) are mainly utilized in image and video generation, where high-quality images are produced through adversarial training between a generator and to critic. 
The generator maps a fixed-dimensional random noise vector and an observed feature vector into structured outputs designed to mimic the statistical properties of real target data. 
Conversely, the critic takes real or synthetic outputs and observed features as input and learns to distinguish generated samples from real ones.
Nevertheless, our scenario-related data lack the spatial and structural connections that are typical of images and videos. 
As a result, directly applying GANs to such data is ineffective. 
To address this issue, we develop a data encoding/decoding scheme to transform scenario-related information into structured binary images, which facilitates effective learning of WGAN-GP. 
The decoding process then converts the generated images into a batch of scenarios compatible with the C\&CG-DRO framework.

\begin{figure}[!ht]
        \centering
    \caption{\textbf{Workflow of One Iteration of AI-Enhanced C\&CG-DRO}}
 \includegraphics[trim = 0 170 0 0, scale=.10]{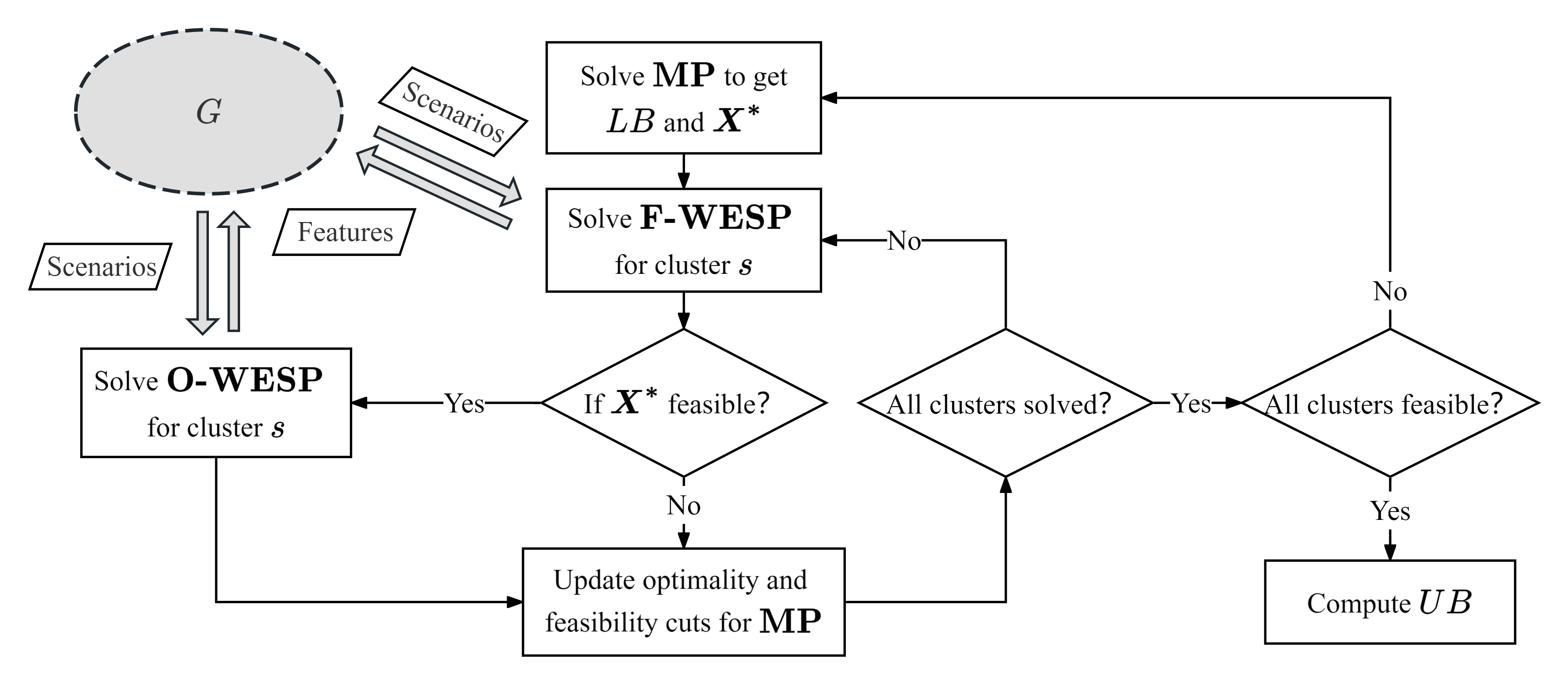}       \label{fig:Algo_Architecture}
\end{figure}

Figure \ref{fig:Algo_Architecture} presents the workflow of one iteration of AI-enhanced C\&CG-DRO. As shown in this figure, the integrated generator (represented by the shaded oval labeled $G$) closely interacts with the solution procedures, i.e., the CG algorithms of F-WESP or O-WESP. 
In each iteration, it receives feature vectors from F-WESP or O-WESP and generates a set of scenarios, which are then used to initialize the corresponding CG algorithms. 
Here, the feature vector is constructed by concatenating the current first-stage solution $\boldsymbol{X}^{*}$ and the model parameters, including both left-hand-side and right-hand-side coefficients of the optimization model.
This initialization accelerates the CG procedures and improves the overall computational performance of the C\&CG-DRO algorithm.
In the following, we present the detailed descriptions of the generator and critic architectures. The design of the encoding/decoding scheme and the details of AI training, which are largely independent of the C\&CG-DRO, are deferred to the next section.

\noindent $\blacksquare$ \ \textbf{Generator}. 
The generator is designed to synthesize two-dimensional images, conditioned on both noise and feature inputs.
It receives two distinct input vectors: a noise vector sampled from a latent distribution and a feature vector that contains auxiliary conditioning information. 
Each vector is independently projected into a high-dimensional space via dedicated fully connected layers, resulting in a set of spatial maps.
These maps are concatenated along the channel dimension to form a composite input tensor, enabling the integration of stochastic variability with domain-specific feature information.

The concatenated tensor is processed by a lightweight convolutional neural network (CNN), a widely adopted architecture to extract critical spatial patterns and reconstruct structured outputs. 
It applies a series of 2D convolutional layers with LeakyReLU activations, which alleviate the vanishing-gradient issue commonly observed in deep networks.  
These layers progressively transform the input into a single-channel image.
A sigmoid activation function is used at the final layer, which ensures smooth gradient propagation during backpropagation. 
After the final activation layer, a custom straight-through binarization module is applied. Once the generator is trained, this module maps the outputs to binary images with 0-1 values.

\begin{figure}[!ht]
        \centering
    \caption{The Architecture of the Generator}
 \includegraphics[trim = 0 250 0 0, scale=.11]{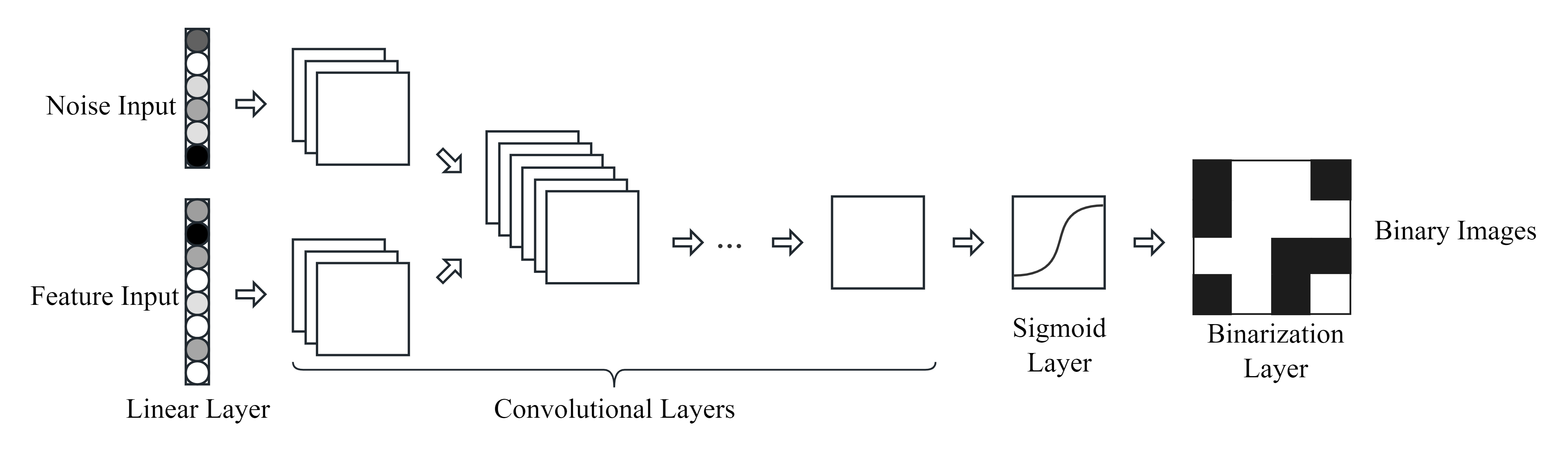}       \label{fig:Generator}
\end{figure}

\noindent $\blacksquare$ \ \textbf{Critic}. 
The critic functions as a conditional evaluator that assesses the authenticity of generated or real images in conjunction with auxiliary feature vectors. 
A feature vector is first transformed into a spatial map through a fully connected layer and reshaped to align with the spatial dimensions of the input image.
This map is then concatenated with the image along the channel dimension to form the input tensor to a CNN. 

To ensure accurate evaluation, this CNN is designed to be more expressive than that employed in the generator. 
It consists of a sequence of convolutional layers with progressively increasing and then decreasing channel depths, enabling multi-scale feature extraction. 
Similarly, LeakyReLU activations are used throughout to promote training stability and improve learning performance. 
The final CNN output is flattened and passed through a fully connected layer to produce a scalar value representing the Wasserstein critic score, which is used for training under the WGAN-GP framework.
\begin{figure}[!ht]
        \centering
    \caption{The Architecture of the Critic}
 \includegraphics[trim = 0 250 0 0, scale=.11]{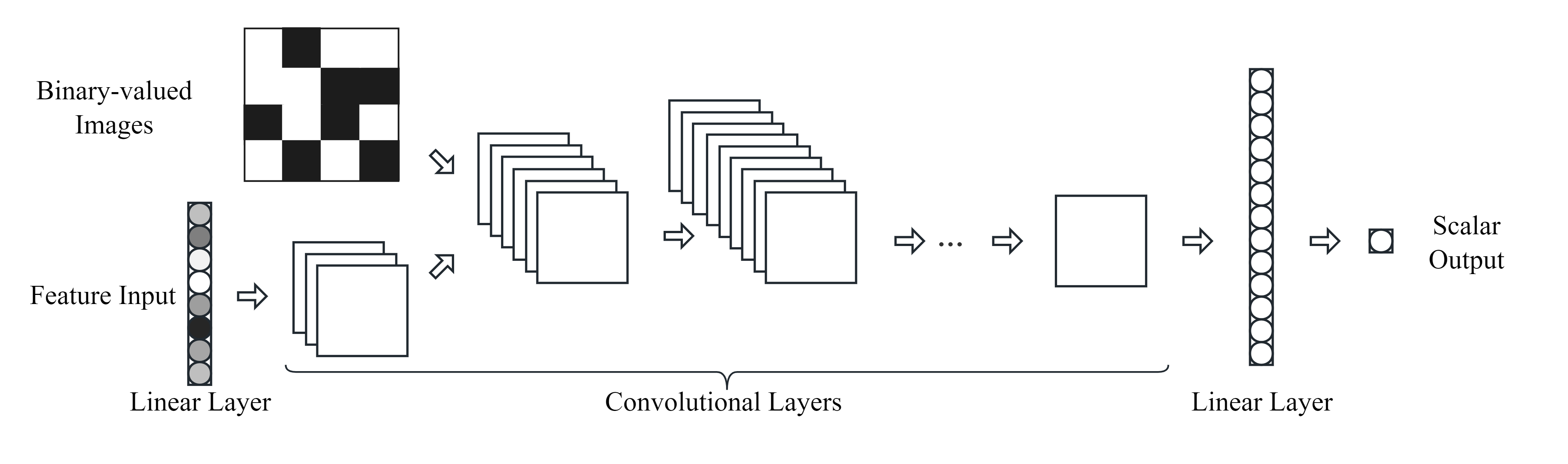}       \label{fig:Critic}
\end{figure}

In the prediction stage, the outputs of the WGAN-GP are decoded into a set of scenarios, which can be directly incorporated into the CG algorithm of F-WESP/O-WESP to enhance C\&CG-DRO while preserving its exactness. 
Moreover, by solving a straightforward LP for a given $\boldsymbol{X}^{*}$, we can efficiently compute a lower bound for O-WESP/F-WESP. 
If those scenarios satisfy condition \eqref{eq:23} of Proposition~\ref{Proposition:A4}, this lower bound is likely to be nontrivial. 
This, in turn, provides a high-quality predictor for the cost/feasibility of $\boldsymbol{X}^{*}$. 
Consequently, instead of executing a full call to Algorithm~1, we can alternatively leverage this prediction step and directly augment MP with the generated scenarios to obtain an improved solution $\boldsymbol{X}^*$. 
Hence, this WGAN-GP module can not only improve CG's initialization, but also serve as a robust surrogate that can replace this computationally intensive procedure. 
This dual functionality highlights the effectiveness of integrating AI techniques to improve both computational efficiency and solution quality of complex optimization algorithms such as C\&CG-DRO.

\section{Data Encoding/Decoding and Training for Generative AI}
As previously noted, we have developed a novel encoding/decoding scheme to support GANs in handling scenario-related information. 
In particular, our approach capitalizes on the structural properties derived from the original DRO model. 
This enable us to alleviate the black-box nature of AI tools, which often disregard explicit boundary conditions, and ensure that the predicted scenarios exhibit the desired optimal structural properties, thereby substantially enhancing prediction accuracy.
Moreover, through decoding, the WGAN-GP generates batches of scenarios consistent with the worst-case distribution, thereby further enhancing computational efficiency.

\subsection{Data Encoding/Decoding}
Our data encoding/decoding scheme is tailored to capture scenarios that are critical to the worst-case distribution for a given feature vector.
As noted in Proposition \ref{Proposition:A2}, a scenario generated by PP has the property that each uncertain variable attains either its upper or lower bound. 
Accordingly, our encoding scheme is designed to convert such a scenario into a one-dimensional binary (label) vector, with 0 denoting the lower bound and 1 denoting the upper bound of its sample space.
Instead of applying WGAN-GP directly to individual binary vectors, we seek to fully leverage its strong image-processing capability. To this end, we introduce a sampling operation on the resulting binary vectors to generate multiple such vectors, which are then sorted to form a binary matrix, i.e., a binary image.
Irrespective of the original probabilities assigned to these scenarios in Algorithm 1, we perform uniform sampling over the aforementioned binary vectors to obtain a collection of scenarios spanning a wide range, thereby promoting diversity in the generative process. As for sorting, since any binary vector can be interpreted as the binary representation of an integer, we can simply sort the binary vectors by  their corresponding integer values.

Note that, for any scenario set generated by Algorithm~1 for a given $\boldsymbol{X}^*$, we can construct a binary matrix (i.e., an image) of the same dimensions by applying the aforementioned operations. Hence, solving one DRO instance via C\&CG-DRO generally yields multiple consistent images that are compatible with neural network training.   
The complete encoding process is illustrated in Figure \ref{fig:Sampling}. Regarding decoding, once a binary image is generated by the generator, each of its rows can be readily decoded, with reference to the predefined sample space, to construct a feasible scenario. A complete image yields a set of high-quality scenarios underlying the worst-case distribution for given $\boldsymbol{X}^*$.  

\begin{figure}[!ht]
        \centering
    \caption{The Data Encoding Process}
 \includegraphics[trim = 0 230 0 0, scale=.105]{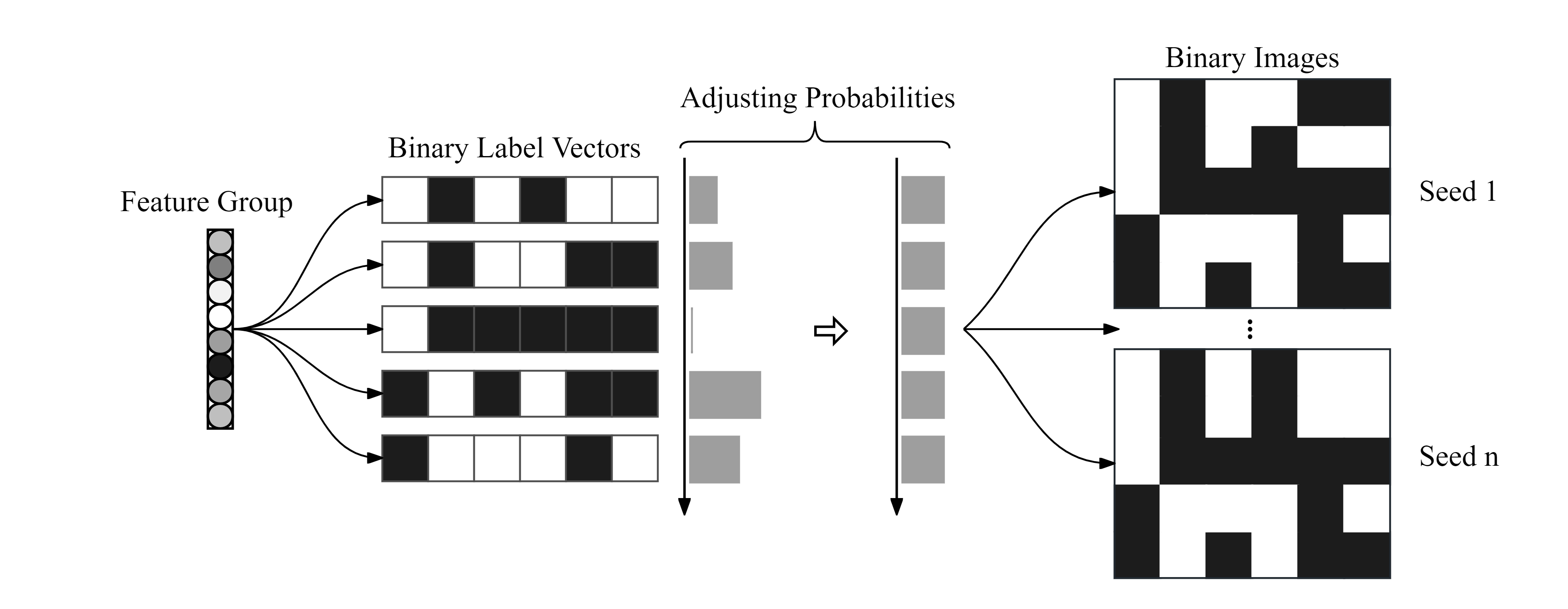}       \label{fig:Sampling}
\end{figure}

To enhance representational diversity, we generate multiple image samples from each feature vector by varying the random seeds used in sampling, thus producing a set of distinct binary image matrices. 
This strategy not only effectively expands the training dataset but also facilitates a more thorough exploration of the underlying probabilistic structure embedded in the original data. 
In practical implementations, the sampling probabilities can be flexibly adjusted according to different strategies.
For instance, when the generator is trained to provide a tighter lower bound, preserving the original scenario probabilities during sampling can substantially improve training effectiveness towards a more accurate estimate.  

\subsection{Network Training}
In parallel, we normalize the feature vectors and apply principal component analysis (PCA) to reduce computational complexity while preserving 99\% of the total variance. 
The resulting dimension-reduced feature vectors are combined with the data encoding output, specifically the sorted binary matrices representing structured ``signal images,'' to serve as the inputs for the downstream generative modeling framework.
For training, we construct a dataset by solving a collection of small-scale instances sampled from a pre-defined parameter space.
The detailed dataset preparation and training hyperparameter settings are provided in \ref{EC3}.

\begin{figure}[!ht]
        \centering
    \caption{Training Loss and Performance}
 \includegraphics[trim = 0 50 0 0, scale=.25]{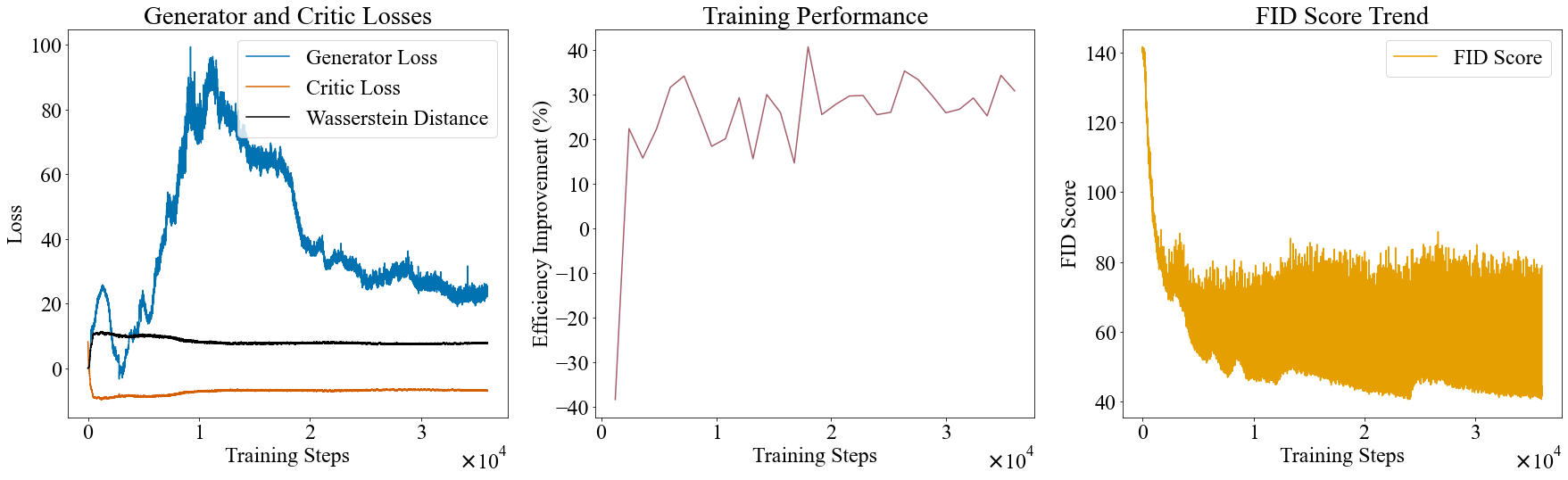}       \label{fig:Loss}
\end{figure}

The training loss curve, the efficiency improvement, and the quality of generated samples are shown in Figure~\ref{fig:Loss}. 
The efficiency improvement is measured by the relative solution time reduction when the samples (i.e., the set of scenarios) produced by the generator of different training steps are embedded into the C\&CG-DRO, compared to the standard C\&CG-DRO implementation.
The quality of generated samples is measured by the divergence with respect to real samples using the Fréchet inception distance (FID).
Similar to the observation made in \citet{gulrajani_improved_2017}, WGAN-GP does not exhibit a clear or monotonic trend in its loss components, unlike traditional GANs. However, as training progresses, adversarial dynamics appear in the generator loss, 
accompanied by the steady efficiency improvement.
Also, the FID score of the generated samples gradually decreases, indicating better sample quality. The oscillations in FID during the later steps of training indicate that the generated instances avoid overfitting while maintaining sufficient diversity.

\begin{figure}[!ht]
        \centering
    \caption{Generated Fake Images and Real Images}
 \includegraphics[trim = 0 70 0 0, scale=.32]{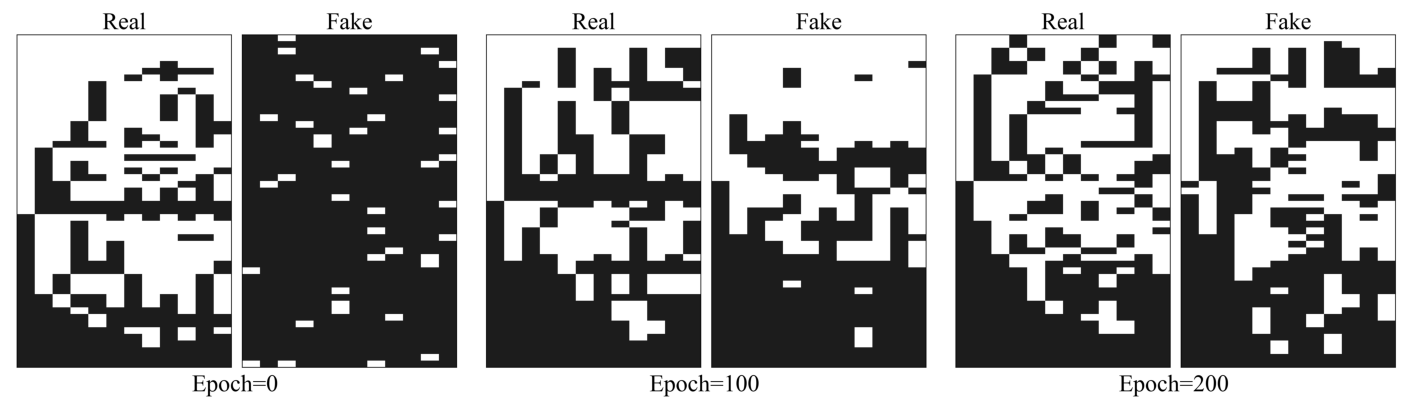}       \label{fig:Image}
\end{figure}

To provide an intuitive understanding of the quality of the generated samples, images produced by the generator over the first 200 training epochs are illustrated in Figure~\ref{fig:Image}. 
The differences between real and generated binary images decrease substantially as training progresses. 
This observation demonstrates not only the effectiveness of our encoding/decoding scheme in representing critical and latent structural patterns, but also the ability of WGAN-GP to capture and reproduce these patterns. In particular, WGAN-GP learns a high-quality representation of the latent mapping from a feature vector to its corresponding binary image (i.e., the associated scenario set underlying the worst-case distribution). 
Consequently, rather than relying on the standard iterative CG procedure that generates a single scenario per iteration, this learned mapping acts as a fast and effective surrogate that produces a collection of probabilistically significant critical scenarios.

Prior studies have largely overlooked optimization-aware encoding/decoding schemes, which limits the integration of AI models into optimization algorithms. 
To the best of our knowledge, this work is the first to explicitly embed structural properties of the underlying model directly into the learning process through the design of a context-aware encoding/decoding scheme, thereby ensuring the feasibility and quality of the predicted solutions.
Moreover, the image-based encoding/decoding scheme enables efficient batch prediction. 
The resulting batch of generated scenarios provides a broader representation of the underlying probability distribution than scenarios generated sequentially based on local information, offering a significant computational advantage. 
The effectiveness of the proposed encoding/decoding scheme and the trained generator will be demonstrated on real-world computational instances in the subsequent section.

\section{Experiments and Analyses}
In this section, a set of experimental studies is conducted with respect to instances generated from real-world data provided by our industry partner. 
Our experiments primarily focus on three aspects: (i) the computational properties of the AI-enhanced C\&CG-DRO algorithm, (ii) the performance and sensitivity of the proposed green manufacturing capacity planning model under realistic data settings, and (iii) a comparison between the DRO and the widely adopted SP approach in capacity planning.
All experiments are implemented in Python on a server with an AMD 5600X processor, 16 GB of RAM, and Gurobi 10.1 as the solver.

\subsection{Data Set and Computational Instances}
\noindent\textbf{Industrial Context and Critical Parameters:}
Our industry partner, a major food and beverage manufacturer in southwest China, operates three factories, each located in a different province. 
Referring to these provinces as regions~1,~2, and~3 respectively, we label the corresponding factories as factory~1,~2, and~3. The initial capacity configurations are summarized in Table~\ref{table:capacity_configuration}, where capacities are measured by the number of production lines installed with conventional technology. 
Capacity types~I and~III are able to produce product families~A,~B, and~C, whereas capacity type~II is restricted to producing product family~C. 
Currently, this manufacturer is considering a production plan over a horizon consisting of $T=4$ decision periods, with each period corresponding to one fiscal quarter. 
Over this horizon, the decision maker is expected to achieve a minimum green penetration requirement of 10\% by switching from conventional to green technologies and investing in renewable generation facilities, while also maintaining a product service-level of at least 99\% in every period. 
In addition, capacity adjustments, whether expansions or reductions, are limited to at most one production line per period.

Next, we present relevant model parameters, noting that all cost values are expressed in units of $10^6$ unless otherwise stated. 
The unit cost of capacity expansion for the three capacity types is $55.00$, $14.00$ and $42.00$ respectively. 
Given that capacity reduction can recover a certain amount of residual value, the unit cost of capacity reduction for the three capacity types is $-8.00$, $0.20$ and $-5.40$, respectively. 
The unit cost of technology upgrade for the three capacity types is $5.50$, $2.10$ and $8.00$, respectively. 
The three factories have different capacities for installing distributed PV (kW), specifically $4000$, $2500$ and $3000$, and consequently they have different fixed investment costs, which are $14.00$, $8.75$ and $10.50$ respectively.
For production lines with conventional technology, the corresponding capacities are $449.97$, $97.85$ and $262.08$, while they are $440.30$, $95.89$ and $256.05$ respectively for production lines with green technology.
The capacity utilization rates, the costs per unit product (not expressed in units of $10^6$) and the electricity consumption (kWh) per unit product produced on conventional and green technology production lines are shown in Table \ref{table:production_parameters} in the form of ``capacity utilization rate/unit production cost/unit electricity consumption'', where ``$-/-/-$'' indicates not applicable. 
In the food and beverage sector, the opportunity cost of unmet demand caused by supply shortages is relatively low, since consumers face abundant substitutes and exhibit low switching costs.
Accordingly, the penalty costs for unit unmet demand for the three product families, which are relatively lower than their unit production cost, are $0.15,0.18$ and $0.12$, respectively (not expressed in units of $10^6$).

\begin{table}[!ht]
\caption{Initial Configuration of Capacities}
\centering
        {\small\def\arraystretch{1.5} 
\begin{tabular*}{0.5\textwidth}{@{\extracolsep{\fill}}lccc}
\hline
\hline
Factory & Capacity I & Capacity II & Capacity III\\
\hline
Factory 1 & 3 & 1 & 2\\
Factory 2 & 2 & 0 & 0\\
Factory 3 & 0 & 0 & 2\\
\hline
\hline
\end{tabular*}
\label{table:capacity_configuration}
}
\end{table}

\begin{table}[!ht]
\caption{Production Related Parameters}
\centering
        {\small\def\arraystretch{1.5} 
\begin{tabular*}{0.7\textwidth}{@{\extracolsep{\fill}}lccccccc}
\hline
\hline
Technology & Product & Capacity I & Capacity II & Capacity III\\
\hline
\multirow{3}{*}{Old} & A & 1.00/1.86/0.34 & -/-/- & 1.13/2.47/0.39\\
~ & B & 1.00/1.50/0.37 & -/-/- & 1.00/1.66/0.37\\
~ & C & 1.04/1.33/0.39 & 1.00/1.50/0.29 & 1.04/1.93/0.39\\
\multirow{3}{*}{Green} & A & 1.01/1.91/0.34 & -/-/- & 1.13/2.53/0.39\\
~ & B & 1.01/1.53/0.37 & -/-/- & 1.00/1.69/0.37\\
~ & C & 1.04/1.03/0.39 & 1.01/1.54/0.29 & 1.04/1.97/0.39\\
\hline
\hline
\end{tabular*}
\label{table:production_parameters}
}
\end{table}

\noindent\textbf{Data Preparation and Computational Instances:}
Historical climate data of quarterly peak sunshine hours from 1992 to 2022 has been collected for the three regions, as shown in \ref{EC3}. 
It is observed that region~3 exhibits the most favorable solar irradiance conditions. 
To capture additional patterns in climate variation across those three regions, both mean and range features are incorporated into the historical data. 
Subsequently, $k$-means clustering is applied to this augmented climate dataset to support the construction of ambiguity sets.  
The procedure for setting the sample space and the first moment bounds is detailed in \ref{EC3}.

As noted in Section 3, identifying an appropriate number of clusters is essential for achieving computational efficiency, even though using a larger number of clusters may help us better capture the underlying randomness. 
Accordingly, we conducted a series of experiments on the base case instance, which is constructed using the aforementioned parameters, with different numbers of clusters. 
Computational results are reported in Table \ref{table:cluster_number}. 
We observe that all performance metrics stabilize when the number of clusters reached 10, indicating that the clustered data points can adequately represent the original climate variability. 
Based on this observation, all our experimental instances are constructed with $S=10$ clusters. 

\begin{table}[h]
\caption{Results of Different Numbers of Clusters}
\centering
        {\small\def\arraystretch{1.5} 
\begin{tabular*}{0.85\textwidth}{@{\extracolsep{\fill}}lcccccc}
\hline
\hline
Number of Clusters & $S=2$ & $S=4$& $S=6$& $S=8$& $S=10$& $S=12$\\
\hline
Total Cost                 & 228.45 & 256.56 & 283.05 & 294.509 & 389.75 & 389.55\\
Strategic Cost             & 73.25  & 101.25 & 128.25 & 139.65  & 235.65 & 235.65\\
\quad  -Capacity Adjustment Cost      & 43.00  & 71.00  & 98.00  & 109.40  & 205.40 & 205.40\\
\quad  -Tech-upgrade Cost             & 11.00  & 11.00  & 11.00  & 11.00   & 11.00  & 11.00\\
\quad  -Renewable Power Investment    & 19.25  & 19.25  & 19.25  & 19.25   & 19.25  & 19.25\\
Tactical Cost                & 155.20 & 155.31 & 154.80 & 154.86  & 154.10 & 153.90\\
Avg. Green Penetration Rate (\%)       & 10.64  & 10.61  & 10.99  & 10.53   & 11.03  & 11.06\\
Avg. Service Level (\%)                & 99.00  & 99.00  & 99.00  & 99.00   & 99.00  & 99.00\\
\hline
\hline
\end{tabular*}
\label{table:cluster_number}
}
\end{table}

\subsection{Algorithm Performance Comparison: Advantages of AI Enhancements}
Across the 100 randomly generated testing instances described in \ref{EC3}, we compare the proposed AI-enhanced C\&CG-DRO algorithm with basic C\&CG, which is a widely applied in the literature \citep{lu_review_2021}, and the standard C\&CG-DRO algorithm. 
To have a fair and consistent comparison, scenarios produced by the generator are employed solely for initializing the CG procedure and are not directly used to augment the master problem.   
All methods are implemented with an optimality tolerance of $10^{-5}$ and a time limit of 3,000 seconds. Since existing implementations of the basic C\&CG for DRO cannot handle infeasible recourse problems, we extend it by incorporating the feasibility-checking procedure and the associated feasibility cuts described in Section~3.2, thereby ensuring a fair and meaningful comparison with the other two methods.
The overall computational performance of these methods is illustrated in Figure~\ref{fig:box_CPU_time}, the computational time spent on individual algorithmic components is presented in Figure~\ref{fig:CPU_time}, and summary statistics of the primary operations are provided in Table~\ref{table:algorithm_performance}, all averaged over 100 testing instances.

As illustrated in Figure~\ref{fig:box_CPU_time}, both C\&CG-DRO and the AI-enhanced C\&CG-DRO generally achieve an order-of-magnitude speedup compared to basic C\&CG. 
Moreover, these two approaches exhibit substantially more stable computational behavior. In contrast, the basic C\&CG shows huge variability in solution time. 
Between the AI-enhanced C\&CG-DRO and C\&CG-DRO, the former consistently  outperforms the latter by requiring less computational time, exhibiting reduced variability, and producing fewer outliers. 
Such consistent performance is critical for practical decision-making, as management teams often need to evaluate the model under multiple parameter settings, for example, in sensitivity analyses.

\begin{figure}[!ht]
    \centering
    \begin{minipage}{.49\textwidth}
        \centering
    \caption{Overall Algorithm Performance}
\includegraphics[trim = 0 50 0 0, scale=.345]{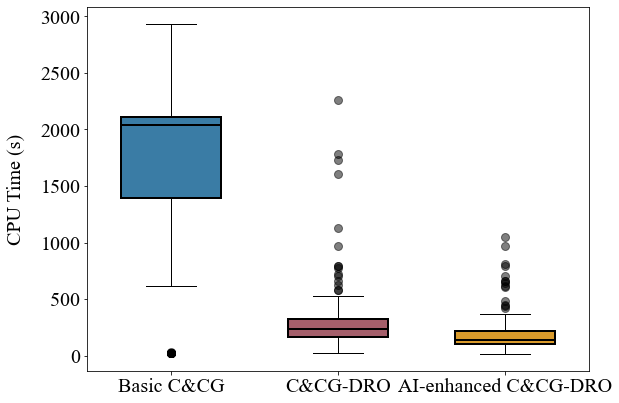} \label{fig:box_CPU_time}
     \end{minipage}%
    \begin{minipage}{.49\textwidth}
        \centering
    \caption{Performance of Algorithmic Components}
\includegraphics[trim = 0 50 0 0, scale=.345]{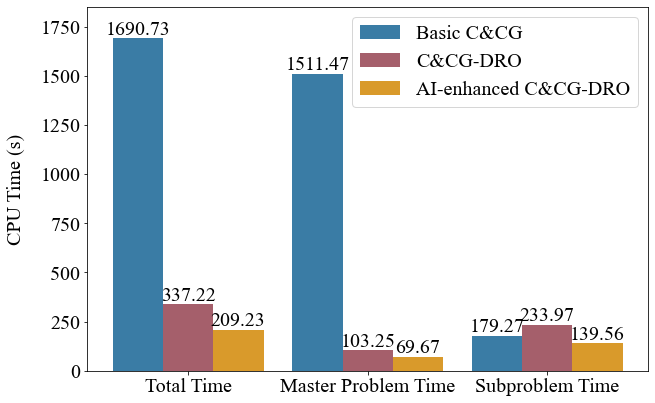} \label{fig:CPU_time}
    \end{minipage}
\end{figure}

Regarding the computational time of individual algorithmic components shown in Figure~\ref{fig:CPU_time}, we note that the primary bottleneck of basic C\&CG lies in solving its master problem. 
This inefficiency arises from its slow convergence, which necessitates many iterations (as reported in the second row of Table \ref{table:algorithm_performance}) and leads to a rapidly expanding master problem, thereby substantially increasing the overall solution time. 
For C\&CG-DRO, although slightly more computational effort is spent on solving the subproblems, the significantly reduced time for the master problems, due to far fewer iterations, renders it remarkably faster than the basic C\&CG.
A notable observation of AI-enhanced C\&CG-DRO is that it incurs the least amount of CPU time in both the master problem and subproblem solves. In particular, for the latter, the time reduction can reach up to 95\% in some instances. This improvement is largely driven by our strong AI tool with the encoding/decoding scheme and WGAN-GP, which produces critical and more representative scenarios (e.g., extreme-point based ones).
As a result, the CG process often yields a smaller scenario set that characterizes the worst-case distribution (as reported in the first row of Table \ref{table:algorithm_performance} below). 
Consequently, the computational burden of both master problems and subproblems is reduced.  

\begin{table}[!ht]
\caption{Summary Statistics of Algorithmic Operations}
\centering
        {\small\def\arraystretch{1.5} 
\begin{tabular*}{0.85\textwidth}{@{\extracolsep{\fill}}lccc}
\hline
\hline
& Basic C\&CG & C\&CG-DRO & AI-enhanced C\&CG-DRO\\
\hline
Avg. Number of Cutting Sets & 261.95 & 178.05 & 167.62\\
Avg. Number of Iterations & 21.70 & 2.83 & 2.74\\
Avg. Final Gap (\%) & 0.42 & 0.00 & 0.00\\
Avg. Time(s)/Iteration & 78.53 & 126.78 & 76.64\\
\hline
\hline
\end{tabular*}
\label{table:algorithm_performance}
}
\end{table}

Finally, in line with the observations from  Figures~\ref{fig:box_CPU_time} and~\ref{fig:CPU_time}, Table~\ref{table:algorithm_performance} shows that the AI-enhanced C\&CG-DRO algorithm consistently outperforms the other two methods across all summary statistics of the major algorithmic operations. 
As a strengthened variant of C\&CG-DRO, it inherits and further enhances the key advantages of the standard C\&CG-DRO approach (e.g., a small number of iterations), while effectively mitigating its key limitations (e.g., high computational cost per iteration). 
In contrast, basic C\&CG not only converges very slowly but also generates a substantially larger number of ineffective scenarios (i.e., cutting sets). 
Based on the aforementioned computational results, the AI-enhanced C\&CG-DRO algorithm is therefore adopted as the solution method in all subsequent experiments.

\subsection{Model Performance and Sensitivity Analysis}
In this subsection, our experimental study primarily focuses on instances of the base case and those obtained from making simple parameter variations around the base case.

\noindent\textbf{Cost and Green Penetration Analysis in the Base Case:} 
From Table~\ref{table:cluster_number} with $S=10$, it can be seen that the capacity adjustment cost plays a dominant role in the strategic cost—205.40 out of 235.65—reflecting that our model is able to adjust production capacities dynamically and economically in response to fluctuating demand.  
In addition, the green-related strategic cost is 19.25, accounting for 12.84\% of the strategic cost. 
As a result, the manufacturing system achieves, even under the worst-case distribution, an average green penetration rate of 11.03\%, satisfying the prescribed green penetration requirement.
Regarding the average service level of 99\%, this outcome is expected. 
Given that the unit penalty cost for unmet demand is lower than the unit production cost in the food and beverage industry, the model derives a least-cost solution that exactly meets the minimum service level.

\noindent\textbf{The Optimal Capacity Plan:}
The optimal capacity plan over the planning horizon is illustrated in Figure~\ref{fig:Decision}, where each pair of digits on the curves represents the numbers of conventional and green production lines, respectively, with their sum equal to the total number of production lines. 
Consistent with our discussion of Table~\ref{table:cluster_number}, all factories will adjust their capacities according to the demands in each time period to minimize costs. 
We also highlight the green technology upgrades and the installation of renewable energy facilities in factories~2 and~3. 
This outcome is expected, as discussed in Section~6.1, because these factories either have lower fixed costs or benefit from more favorable solar conditions.

\begin{figure}[!ht]
        \centering
    \caption{Base Case Planning Results}
 \includegraphics[trim = 100 50 100 0, scale=.34]{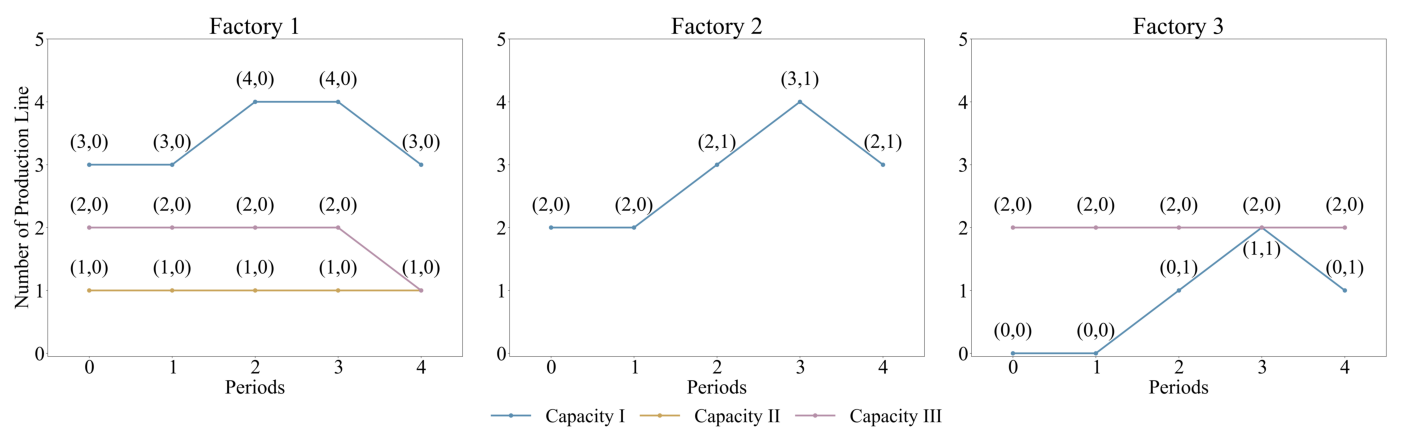}       \label{fig:Decision}
\end{figure}

\noindent\textbf{Sensitivity Analysis on Service Level and Green Penetration Rate:} 
It is noted that the service-level and green-penetration constraints are adjustable, as they are imposed by decision makers to reflect their managerial and sustainability objectives. 
To conduct a systematic sensitivity analysis with respect to these parameters, we first fix the right-hand side of the service-level constraint in \eqref{eq:12} at a target value and vary the right-hand side of the green-penetration constraint in \eqref{eq:10} from 2\% to 18\%. 
We then fix the right-hand side of \eqref{eq:10} at a target value and vary the right-hand side of \eqref{eq:12} from 90\% to 100\%. 
The results are illustrated as box plots in Figure~\ref{fig:Sensitive}. As expected, the total cost increases with both higher service levels and stricter green penetration requirements, consistent with managerial intuition.  Moreover, we observe that the total cost is more sensitive to changes in the service level. 
This can be attributed to the fact that the product capacity adjustment cost constitutes a major portion of the system's strategic cost.

\begin{figure}[!ht]
        \centering
    \caption{Sensitivity Analysis Results}
 \includegraphics[trim = 0 50 0 0, scale=.22]{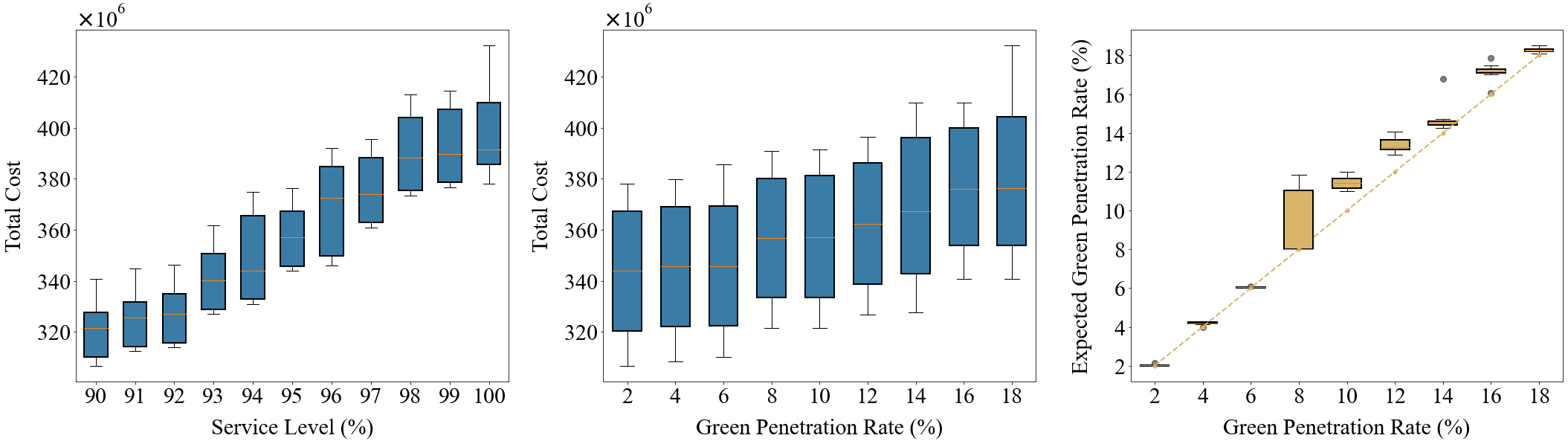}       \label{fig:Sensitive}
\end{figure}

Under all parameter settings, we observe that the manufacturer’s optimal strategy is to meet the service-level requirement exactly.
It is interesting to note that the expected green penetration rate under the worst-case distribution can exceed the imposed requirement significantly, as seen in the last figure in Figure \ref{fig:Sensitive} when that requirement is set between 8\% and 16\%. 
This phenomenon can be attributed to the structural characteristics of the system. 
Note that renewable energy investment decisions are binary at individual factories. 
As a result, satisfying a given penetration requirement may necessitate the actual expected green penetration rate, under all distribution within the ambiguity set, to exceed the predefined requirement.

This observation suggests that policymakers should account for manufacturers’ system structures, demand scale and variability, and the industry's overall risk tolerance when designing green penetration policies.
Failing to incorporate these considerations may result in excessively stringent policy standards that induce over-investment in renewable capacity, leading to inefficiencies and avoidable welfare losses.

\subsection{Performance Comparison Between DRO and the Classical SP Approach}

As noted in Section 2, SP has been widely used as a mainstream method to handle random factors in capacity and production planning.
To demonstrate the applicability and advantages of DRO in this regard, we conduct a systematic comparison between the proposed DRO and the sample average approximation (SAA)-based SP model (presented in \ref{EC4}). 
Specifically, we employ the previously mentioned 100 testing instances for DRO and generate their sampling-based counterparts for SP. 
To enable a consistent and comparable evaluation of their performance, two random-demand distributions are considered when generating scenarios for SP: (i) uniform (Uni.) distributions over the sample space of each cluster, and (ii) truncated Gaussian (Gau.) distributions with the mean set to the center of the sample space and the standard deviation set to one-sixth of the interval size of the sample space for each cluster. 
Hence, both empirical distributions represented by these scenarios lie within the ambiguity set underlying our DRO model, guaranteeing that any solution feasible to DRO is also feasible for SP. 
Nevertheless, the converse does not hold, as solutions feasible to SP may violate feasibility requirements under certain distributions of the ambiguity set. 
Indeed, such violations can occur frequently, posing a substantial challenge for decision-making through SAA-based SP.

\begin{table}[!ht]
\caption{Performance of DRO and SP Models}
\centering
        {\small\def\arraystretch{1.5} 
\begin{tabular*}{0.9\textwidth}{@{\extracolsep{\fill}}l|ccc|cc|cc}
\hline
\hline
& \multicolumn{3}{c}{DRO} & \multicolumn{2}{c}{SP (Uni.)} & \multicolumn{2}{c}{SP (Gau.)}\\
\hline
Demand Distribution & Wor. & Uni. & Gau. & Uni. & Wor. & Gau. & Wor.\\
\hline
Number of Feasible Instances & \textbf{86} & 86/\textbf{86} & 86/\textbf{86} & \textbf{92} & 0/\textbf{92} & \textbf{92} & 37/\textbf{92} \\
Avg. Total Cost & 448.00 & 430.83 & 430.64 & 413.90 & - & 427.43 & 429.81 ($\overline{F}$) \\
Avg. Green Penetration Rate (\%)& 9.91 & 9.86 & 9.83 & 9.47 & - & 9.52 & 7.98 ($\overline{F}$)\\
\hline
\hline
\end{tabular*}
\label{table:model_performance}
}
\end{table}

After solving both the DRO and SP models, the resulting optimal decisions are cross-validated under the worst-case (Wor.) distribution in the ambiguity set as well as the two aforementioned distributions, respectively. 
All computational results are presented in Table \ref{table:model_performance}, where $\overline{F}$ indicates that the associated value represents the average over DRO-feasible instances. Note that, among the 100 testing instances, our DRO model identifies feasible (indeed, optimal) solutions for 86 instances, while the remaining 14 instances are DRO infeasible.
When cross-validating the DRO model’s solutions with respect to uniformly or Gaussian-sampled scenarios, they are 100\% feasible, which is naturally expected.
On the other hand, the SP model demonstrates a poor performance. 
For the 100 instances with uniformly or Gaussian-sampled scenarios, SP derives optimal solutions for two sets of 92 instances, both of which include the previous 86 instances as a subset. Nevertheless, when these solutions are cross-validated with respect to the ambiguity set, they are frequently infeasible. In particular, for the 92 solutions generated by SP with uniformly sampled scenarios, none is feasible, i.e., there exists at least one distribution under which each solution becomes infeasible.

Regarding the average total costs, which are obtained over 86 DRO-feasible instances, we first compare the in-sample performance of the two models under perfect distributional knowledge.
The SP model yields smaller cost estimates under its assumed true distributions, with average values of 413.90 and 427.43 for uniform and Gaussian distributions, respectively, compared to 448.00 for the DRO model under worst-case distributions.
This increase in cost indicates that DRO yields more conservative solutions.
More importantly, we evaluate the out-of-sample (OOS) performance of the DRO solutions by assessing their expected costs across the uniformly and Gaussian-distributed demand scenarios.
Under these OOS scenarios, the DRO decisions achieve average costs of 430.83 and 430.64 respectively. 
In particular, for the Gaussian-sampled scenarios, the performance loss of the DRO solutions relative to the SP benchmark (whose average cost is 427.43 with the exact distributional knowledge) is marginal. 
Moreover, for the 37 instances where SP solutions are feasible under the ambiguity set, the average  cost of DRO solutions (414.18) is clearly lower than that of SP solutions (429.81). 
In addition, the DRO model performs well in meeting the green penetration requirement. By leveraging green technology upgrades and coordinated production across factories over the planning horizon (as shown in Figure \ref{fig:Decision}), its solutions increase PV energy utilization to satisfy both production demand and the green penetration target.
These results show that the DRO solutions simultaneously achieve strong economic performance, sustainability objectives, and desirable out-of-sample robustness.

Before concluding this subsection, we note in our industrial context that the primary sources of infeasibility arise from the green-penetration requirements, the service-level constraints, and, in particular, their interactions. 
As mentioned earlier, these constraints are imposed by decision makers to reflect managerial and sustainability objectives, and the ability of our DRO model and solution algorithm to explicitly detect infeasibility within the ambiguity set is of substantial practical value. 
Such diagnostic capability enables management teams to revise their targets, allocate more resources, or acquire additional information to better understand and reduce underlying uncertainty. So, we believe it should be promoted for broader adoption in real-world decision-making settings where feasibility under uncertainty is as critical as optimality.

\section{Conclusion}
This study focuses on a strategic capacity planning problem of green manufacturing, involving multiple factories, production lines, and product categories. It is complicated by the randomness of the climate factors that affect product demand and renewable energy generation. 
To address this challenge, a rather comprehensive two-stage DRO-based green manufacturing capacity planning model is developed, based on an ambiguity set constructed by a data-driven clustering technique to leverage data with diverse characteristics. To solve practical-scale instances, an effective generative AI network is integrated into the customized C\&CG-DRO computational framework. Particularly, a novel encoding/decoding scheme is designed to provide the AI model with structurally informative data for learning and transform AI-generated outputs into formats that are compatible with C\&CG-DRO. 

Experimental results show that the proposed model is able to achieve both strong economic performance and robust feasibility under random demand and renewable energy generation, and the AI-enhanced framework significantly boosts the computational efficiency and consistency compared to the standard implementation. More broadly, the findings highlight the value of integrating green technology adoption with coordinated capacity planning to better leverage renewable energy while aligning production efficiency with sustainability and corporate social responsibility objectives.

There are two main areas for future research. 
First, the proposed framework can be extended to large-scale and complex manufacturing systems, such as multinational networks managing production and logistics under heterogeneous environmental regulations and carbon policies, thereby supporting robust and low-carbon planning and operations amid regulatory and climate uncertainty. 
Second, the development of more sophisticated encoding/decoding schemes and the integration of additional AI techniques into diverse decision-making frameworks remains a promising direction.
Research in these areas may yield more effective methodological tools for advancing operations and management practice.


\bibliographystyle{ref}

 \let\oldbibliography\thebibliography
 \renewcommand{\thebibliography}[1]{
    \oldbibliography{#1}
    \baselineskip12pt 
    \setlength{\itemsep}{1pt}
 }
\bibliography{ref}

\ECSwitch
\ECHead{E-Companion}

\section{Proofs of Propositions} \label{EC1}

\subsection{Proof of Proposition~\ref{Proposition:A2}}
\proof{Proof.} Notice that $Q^{s}(\boldsymbol{X}^{*},\boldsymbol{\xi}^{s})$ is a linear minimization problem.
Thus \eqref{eq:22} is a bi-level problem. By leveraging the duality of the recourse $Q^{s}(\boldsymbol{X}^{*},\boldsymbol{\xi}^{s})$, one can have
 \begin{equation}\label{eq:ec1}
    \begin{aligned}
        r^{s*} = \max_{\boldsymbol{\xi}^{s},\boldsymbol{\pi}^{s}} & \sum_{\mathcal{K}} \sum_{\mathcal{T}}(\beta^{Ls*}_{kt} - \beta^{Us*}_{kt})\xi_{kt}^{s} - \alpha^{s*} + (\boldsymbol{d} - \boldsymbol{B}^{s}_{\boldsymbol{X}}\boldsymbol{X}^{*} - \boldsymbol{B}_{\boldsymbol{\xi}}\boldsymbol{\xi}^{s})^\top \boldsymbol{\pi}^{s}\\
        \mathrm{s.t.}&\,\xi^{Ls}_{kt} \leq \xi^{s}_{kt} \leq \xi^{Us}_{kt},\forall k \in \mathcal{K},\forall t \in \mathcal{T};\\
        &\boldsymbol{B}_{\boldsymbol{Y}}^\top \boldsymbol{\pi}^{s} \leq \boldsymbol{C}_{\boldsymbol{Y}}; \ \ \boldsymbol{\pi}^{s} \geq 0,
    \end{aligned}
 \end{equation}
where $\boldsymbol{\pi}^{s}$ is a dual variable vector. 

Denote the feasible region of $\boldsymbol{\pi}^{s}$ as $\boldsymbol{\Pi}^{s} = \left \{ \boldsymbol{B}_{\boldsymbol{Y}}^\top \boldsymbol{\pi}^{s} \leq \boldsymbol{C}_{\boldsymbol{Y}}, \boldsymbol{\pi}^{s} \geq 0 \right \}$, which is non-empty and independent of $\boldsymbol{\xi}^{s}$. Note that \eqref{eq:ec1} reduces to a simple LP for any fixed $\pi^s\in \Pi^s$. 

Given the structure of the first constraint of \eqref{eq:ec1}, i.e., the sample space of $\xi^s_{kt}$, it is straightforward to show that \eqref{eq:ec1} has an optimal solution that $\xi^{s*}_{kt}$ equals either $\xi^{Ls}_{kt}$ and $\xi^{Us}_{kt}$.\Halmos\endproof

\subsection{Proof of Proposition~\ref{Proposition:A3}}
\proof{Proof.} 
As shown in the proof of the previous Proposition, the dual feasible region $\boldsymbol{\Pi}^{s}$ is independent of $\boldsymbol{\xi}^{s}$.
To enumerate the extreme points of $\boldsymbol{\Pi}^{s}$ as $\{\boldsymbol{\pi}^{s,n}\}_{n=1}^{N_s}$ (with $N_s<\infty$), the dual objective yields
\begin{equation}
Q^{s}(\boldsymbol{X}^{*},\boldsymbol{\xi}^{s})
=\max_{n=1,\ldots,N_s}
\left\{
\left(\boldsymbol{d}-\boldsymbol{B}^{s}_{\boldsymbol{X}}\boldsymbol{X}^{*}\right)^{\top}\boldsymbol{\pi}^{s,n}
-\left(\boldsymbol{\xi}^{s}\right)^{\top}\boldsymbol{B}_{\boldsymbol{\xi}}^{\top}\boldsymbol{\pi}^{s,n}
\right\}. \nonumber
\end{equation}
Hence, $Q^{s}(\boldsymbol{X}^{*},\boldsymbol{\xi}^{s})$ is the maximum of finitely many affine functions in $\boldsymbol{\xi}^{s}$, which implies that it is piecewise linear in $\boldsymbol{\xi}^{s}$. 

According to relation \eqref{eq:11}, $Y^{U}_{kt}$ can be replaced by other variables. The objective of \eqref{eq:14} can be rewritten as
 \begin{equation}
    \min \sum_{\mathcal{I},\mathcal{J},\mathcal{K},\mathcal{T}}(c^{O}_{ijk}-c^{U}_{k})Y^{OT}_{ijkt} +\sum_{\mathcal{I},\mathcal{J},\mathcal{K},\mathcal{T}}(c^{N}_{ijk}-c^{U}_{k})(Y^{NT}_{ijkt}+Y^{NG}_{ijkt})+\sum_{\mathcal{K},\mathcal{T}}c^{U}_{k}\xi_{kt}, \nonumber
 \end{equation}
now we can eliminate its constraint \eqref{eq:11} and the constraint \eqref{eq:12} can be rewritten as
 \begin{equation}\label{eq:ec2}
    \sum_{\mathcal{I},\mathcal{J}}(Y^{OT}_{ijkt}+Y^{NT}_{ijkt}+Y^{NG}_{ijkt}) \geq \lambda \xi_{kt}.
 \end{equation}

We consider the effect of a change in $\xi^{s}_{kt}$ while keeping all other parameters fixed. 
Under the standing cost assumptions $c^{O}_{ijk} \geq c^{U}_{k}$ and $c^{N}_{ijk} \geq c^{U}_{k}$ for all $i \in \mathcal I$, $j \in \mathcal J$, and $k \in \mathcal K$, all decision variables in the reformulated recourse problem enter the objective with nonnegative coefficients. 
In \eqref{eq:14}, the demand realization $\xi^{s}_{kt}$ enters the model in two ways. First, it appears linearly in the objective function through the term $c^{U}_{k}\xi^{s}_{kt}$. Second, it affects the feasible region via constraint \eqref{eq:ec2}, where an increase in $\xi^{s}_{kt}$ tightens the demand satisfaction requirement. Since all decision variables carry nonnegative costs, satisfying a more stringent constraint can not decrease the optimal objective value. Therefore, for any $\xi^{s}_{kt}$ within the interval $[\xi^{Ls}_{kt}, \xi^{Us}_{kt}]$, an increase in $\xi^{s}_{kt}$ leads to an non-decreasing optimal value of the recourse problem. This establishes that the recourse cost function $Q^{s}(\cdot)$ is monotonically non-decreasing in $\xi^{s}_{kt}$, which is a key property for subsequent algorithm convergence analysis. \Halmos\endproof

\subsection{Proof of Proposition~\ref{Proposition:A4}}
\proof{Proof.} By construction, $\widetilde{Q}^{s}_{f}(\boldsymbol{X}^{*},\boldsymbol{\xi}^{s})$ measures the minimum total constraint violation required to make $Q^{s}(\boldsymbol{X}^{*},\boldsymbol{\xi}^{s})$ feasible under scenario $\boldsymbol{\xi}^{s}$. 
If $\boldsymbol{X}^{*}$ is feasible, then for every $\boldsymbol{\xi}^{s}$ in the support of any distribution $\mathbb{P}^{s}\in\mathcal{P}^{s}$, $Q^{s}(\boldsymbol{X}^{*},\boldsymbol{\xi}^{s})$ admits a feasible solution without relaxation, implying $\widetilde{Q}^{s}_{f}(\boldsymbol{X}^{*},\boldsymbol{\xi}^{s})=0$. Consequently, the worst-case expected value satisfies $\widetilde{w}^{s*}=0$.

Conversely, if $\widetilde{w}^{s*}>0$, then there exists at least one distribution $\mathbb{P}^{s}\in\mathcal{P}^{s}$ under which the expected constraint violation is strictly positive, indicating that $Q^{s}(\boldsymbol{X}^{*},\boldsymbol{\xi}^{s})$ is infeasible for some realizations of $\boldsymbol{\xi}^{s}$. Hence, $\boldsymbol{X}^{*}$ is infeasible.\Halmos\endproof

\subsection{Proof of Proposition~\ref{Proposition:A5}}
\proof{Proof.} 
It follows from \cite{lu_two-stage_2024} that Algorithm 1 terminates with an optimal solution of O-WESP in a finite number of iterations, which is bounded by the number of extreme points of $\Xi^s$. As $\Xi^s$ is a box set, it is clear that O-CGMP terminates in $O(2^{|\mathcal K||\mathcal T|})$ iterations. Next, we prove the tightness of that constraint at termination by contradiction. Without loss of generality, assume that 
$\xi^{Ls}_{kt}< \gamma^{Us}_{kt}< \xi^{Us}_{kt}$, $\forall k \in \mathcal{K}$ and $\forall t \in \mathcal{T}$, otherwise we can remove that constraint or simply set $\boldsymbol{\xi}^{s}_{kt}$ to the upper or lower bound. 
Consider an arbitrary optimal distribution  $P^*_{\boldsymbol{\xi}^s}>0$ for $\boldsymbol{\xi}^s\in \Xi^{s*}$ with
 \begin{equation}
        \sum_{\boldsymbol{\xi}^{s}\in\Xi^{s*}} P^*_{\boldsymbol{\xi}^{s}}\xi_{kt}^{s} < \gamma^{Us}_{kt} \ \forall k \in \mathcal{K},\forall t \in \mathcal{T}. \label{eq_EC_P4}
 \end{equation}
Define the corresponding mean values 
 \begin{equation}
        \mu_{kt}
:=\sum_{\boldsymbol{\xi}^{s}\in\Xi^{s*}} P^{*}_{\boldsymbol{\xi}^{s}}\,\xi^{s}_{kt}, \nonumber
 \end{equation}
and the strictly positive slacks
 \begin{equation}
        \delta_{kt}:=\gamma^{Us}_{kt}-\mu_{kt}>0. \nonumber
 \end{equation}
Denote $\delta_{k't'}=\min_{k,t}\delta_{kt}$, which is strictly larger than 0. 
Clearly, $\gamma_{k't'}^s(\theta) = \sum_{\boldsymbol{\xi}^{s}\in\Xi^{s*}} P^{*}_{\boldsymbol{\xi}^{s}}\,\big(\boldsymbol{\xi}^{s}_{k't'}+(\xi_{k't'}^{Us} - \boldsymbol{\xi}^{s}_{k't'})\theta\big)$ is a continuous function on $\theta \in [0,1]$. Since the range of $\gamma_{k't'}^s(\theta)$ is $[\mu_{k't'}, \xi_{k't'}^{Us}]$ and $\mu_{k't'} < \gamma_{k't'}^{Us} < \xi_{k't'}^{Us}$, there exists a $\theta^*$ such that $\gamma_{k't'}^s(\theta^*) = \gamma_{k't'}^{Us}$. We introduce a new set of scenarios $\tilde{\Xi}^{s^*} = \{\tilde{\boldsymbol{\xi}}_{kt}^s\}$ as in the following:
 \begin{equation}
        \tilde{\boldsymbol{\xi}}^s_{kt}
        =
        \begin{cases}
         \boldsymbol{\xi}^s_{kt}& k\neq k' \mathrm{or} \ t\neq t',\\[0.3em]
         \boldsymbol \xi^{s}_{k't'}+(\xi_{k't'}^{Us} - \boldsymbol{\xi}^{s}_{k't'})\theta^* & k=k' \mathrm{and} \ t=t'.
        \end{cases}  \label{ECP_constuction}
 \end{equation}
Since $(\xi_{k't'}^{Us} - \boldsymbol{\xi}^{s}_{k't'})\theta^* \geq 0$, according to Proposition \ref{Proposition:A3} and \eqref{ECP_constuction}, we have the following relationship immediately
 \begin{equation}
         \sum_{\boldsymbol{\xi}^{s}\in \tilde{\Xi}^{s*}}P^{*}_{\boldsymbol{\xi}^{s}}
        Q^{s}(\boldsymbol{X}^{*},\boldsymbol{\xi}^{s})\geq \sum_{\boldsymbol{\xi}^{s}\in\Xi^{s*}} P^{*}_{\boldsymbol{\xi}^{s}}\,Q^s(\boldsymbol{X}^{*},\xi^{s}). \nonumber
 \end{equation}
 Hence, for any arbitrary optimal distribution, there exists another optimal distribution $P^*_{\boldsymbol{\xi}^s}$ supported on $\boldsymbol{\xi}^s \in \tilde{\Xi}^{s^*}$ such that $\sum_{\boldsymbol{\xi}^{s}\in \tilde{\Xi}^{s*}} P_{\boldsymbol{\xi}^{s}}\xi_{k't'}^{s} =\gamma^{Us}_{k't'}$. Therefore, there must exist at least one $(k,t)$ such that
\begin{equation}
        \sum_{\boldsymbol{\xi}^{s}\in\Xi^{s*}} P_{\boldsymbol{\xi}^{s}}\xi_{kt}^{s} =\gamma^{Us}_{kt}. \nonumber
\end{equation}
\Halmos\endproof

\section{Model Reformulations} \label{EC2}

\subsection{Reformulation of PP}
Since the inner recourse problem in PP is a LP, we can adopt the KKT conditions to reformulate \eqref{eq:ec1} as
 \begin{equation}
    \begin{aligned}
        \max_{\boldsymbol{\xi}^{s},\boldsymbol{\pi}^{s}} & \sum_{\mathcal{K}} \sum_{\mathcal{T}}(\beta^{Ls*}_{kt} - \beta^{Us*}_{kt})\xi_{kt}^{s} - \alpha^{s*} + \boldsymbol{C}^\top_{\boldsymbol{Y}}\boldsymbol{Y}^{s}\\
        \mathrm{s.t.}&\,\xi^{Ls}_{kt} \leq \xi^{s}_{kt} \leq \xi^{Us}_{kt};\forall k \in \mathcal{K},\forall t \in \mathcal{T},\\
        &\boldsymbol{B}_{\boldsymbol{Y}}^\top \boldsymbol{\pi}^{s} \leq \boldsymbol{C}_{\boldsymbol{Y}},\\
        &\boldsymbol{Y}^{s} \perp (\boldsymbol{C}_{\boldsymbol{Y}}-\boldsymbol{B}_{\boldsymbol{Y}}^\top \boldsymbol{\pi}^{s}) = 0,\\
        &\boldsymbol{\pi}^{s} \perp (\boldsymbol{B}_{\boldsymbol{Y}}\boldsymbol{Y}^{s}+\boldsymbol{B}_{\boldsymbol{\xi}}\boldsymbol{\xi}^{s}-\boldsymbol{d}+\boldsymbol{B}^{s}_{\boldsymbol{X}}\boldsymbol{X}^{*}) = 0,\\
        &\boldsymbol{Y}^{s} \geq 0, \boldsymbol{\pi}^{s} \geq 0.
    \end{aligned} \nonumber
 \end{equation}
Note that the complementary constraints are expressed in the form of products equal to zero, which can be easily linearized by introducing big-$M$ constants and binary decision variables.

\subsection{Model of Feasibility Check Problem}
With the same mechanism of conducting the O-CGMP and PP, the F-CGMP and F-PP share the similar structures. 
The F-CGMP is detailed as
 \begin{equation}
    \begin{aligned}
        \text{F-CGMP}:\widetilde{\eta}^{s}(\widetilde{\Xi}^{s*}) = \max &\sum_{\boldsymbol{\xi}^{s}\in\widetilde{\Xi}^{s*}} P_{\boldsymbol{\xi}^{s}}\widetilde{Q}_{f}^{s}(\boldsymbol{X}^{*},\boldsymbol{\xi}^{s})\\
        \mathrm{s.t.}&\sum_{\boldsymbol{\xi}^{s}\in\widetilde{\Xi}^{s*}} P_{\boldsymbol{\xi}^{s}} = 1,\\
        &\sum_{\boldsymbol{\xi}^{s}\in\widetilde{\Xi}^{s*}} P_{\boldsymbol{\xi}^{s}}\xi_{kt}^{s} \leq \gamma^{Us}_{kt}; \forall k \in \mathcal{K},\forall t \in \mathcal{T},\\
        &-\sum_{\boldsymbol{\xi}^{s}\in\widetilde{\Xi}^{s*}} P_{\boldsymbol{\xi}^{s}}\xi_{kt}^{s} \leq -\gamma^{Ls}_{kt}; \forall k \in \mathcal{K},\forall t \in \mathcal{T},\\
        &P_{\boldsymbol{\xi}^{s}} \geq 0;\forall \boldsymbol{\xi}^{s}\in\widetilde{\Xi}^{s*},
    \end{aligned} \nonumber
 \end{equation}
note that $\widetilde{\Xi}^{s*}$ is the collection of previously identified scenarios. Let $\boldsymbol{I}$ be an identity matrix with appropriate dimension, the F-PP is
 \begin{equation}
    \begin{aligned}
        \text{F-PP}:\widetilde{r}^{s*} = \max_{\boldsymbol{\xi}^{s},\boldsymbol{\pi}^{s}} & \sum_{\mathcal{K}} \sum_{\mathcal{T}}(\beta^{Ls*}_{kt} - \beta^{Us*}_{kt})\xi_{kt}^{s} - \alpha^{s*} + (\boldsymbol{d} - \boldsymbol{B}^{s}_{\boldsymbol{X}}\boldsymbol{X}^{*} - \boldsymbol{B}_{\boldsymbol{\xi}}\boldsymbol{\xi}^{s})^\top \boldsymbol{\pi}^{s}\\
        \mathrm{s.t.}&\,\xi^{Ls}_{kt} \leq \xi^{s}_{kt} \leq \xi^{Us}_{kt};\forall k \in \mathcal{K},\forall t \in \mathcal{T},\\
        &\boldsymbol{B}_{\boldsymbol{Y}}^\top \boldsymbol{\pi}^{s} \leq \boldsymbol{0},\\
        &\boldsymbol{I} \boldsymbol{\pi}^{s} \leq \boldsymbol{1},\\
        &\boldsymbol{\pi}^{s} \geq 0. \nonumber
    \end{aligned}
 \end{equation}
Now the procedures of Algorithm 1 can be used to solve the F-WESP. 

\subsection{Reformulation of MP}
By exploiting the dual formulations of the two embedded maximization problems in MP, it can be equivalently reformulated into a mixed-integer linear program by replacing constraints \eqref{eq:24c} and \eqref{eq:24d}, which is
 \begin{subequations}
    \begin{align}
        \text{MP}:\underline{g} = \min &\, \boldsymbol{C}_{\boldsymbol{X}}^\top \boldsymbol{X} + \sum_{\mathcal{S}} q_{s}\eta_{s} \nonumber\\
        \mathrm{s.t.}&\, \boldsymbol{X} \in \mathcal{X};\nonumber\\
        &\eta_{s} \geq \alpha^{s(o)} + \boldsymbol{\gamma}^{Us\top}\boldsymbol{\beta}^{Us(o)} - \boldsymbol{\gamma}^{Ls\top}\boldsymbol{\beta}^{Ls(o)},\forall s \in \mathcal{S};\nonumber\\
        &\alpha^{s(o)} + \boldsymbol{\xi}^{s\top}\boldsymbol{\beta}^{Us(o)} - \boldsymbol{\xi}^{s\top}\boldsymbol{\beta}^{Ls(o)} \geq \boldsymbol{C}_{\boldsymbol{Y}}^\top \boldsymbol{Y}^{s}_{\boldsymbol{\xi}^{s}},\forall s \in \mathcal{S},\forall \boldsymbol{\xi}^{s} \in \Xi^{s(o)};\nonumber\\
        &\boldsymbol{Y}^{s}_{\boldsymbol{\xi}^{s}} \in \mathcal{Y}^{s}(\boldsymbol{X},\boldsymbol{\xi}^{s}),\forall s \in \mathcal{S},\forall \boldsymbol{\xi}^{s} \in \Xi^{s(o)};\nonumber\\
        &0 \geq \alpha^{s(f)} + \boldsymbol{\gamma}^{Us\top}\boldsymbol{\beta}^{Us(f)} - \boldsymbol{\gamma}^{Ls\top}\boldsymbol{\beta}^{Ls(f)},\forall s \in \mathcal{S};\nonumber\\
        &\alpha^{s(f)} + \boldsymbol{\xi}^{s\top}\boldsymbol{\beta}^{Us(f)} - \boldsymbol{\xi}^{s\top}\boldsymbol{\beta}^{Ls(f)} \geq  \boldsymbol{1}^{\top} \widetilde{\boldsymbol{Y}}^{s}_{\boldsymbol{\xi}^{s}},\forall s \in \mathcal{S},\forall \boldsymbol{\xi}^{s} \in \Xi^{s(f)}; \nonumber\\
        &\boldsymbol{Y}^{s}_{\boldsymbol{\xi}^{s}},\widetilde{\boldsymbol{Y}}^{s}_{\boldsymbol{\xi}^{s}} \in \widetilde{\mathcal{Y}}^{s}(\boldsymbol{X},\boldsymbol{\xi}^{s}),\forall s \in \mathcal{S},\forall \boldsymbol{\xi}^{s} \in \Xi^{s(f)}; \nonumber\\
        &\boldsymbol{\beta}^{Us(o)},\boldsymbol{\beta}^{Ls(o)},\boldsymbol{\beta}^{Us(f)},\boldsymbol{\beta}^{Ls(f)} \geq 0, \nonumber
    \end{align}
 \end{subequations}
where $\alpha^{s(o)},\alpha^{s(f)},\boldsymbol{\beta}^{Us(o)},\boldsymbol{\beta}^{Ls(o)},\boldsymbol{\beta}^{Us(f)}$ and $\boldsymbol{\beta}^{Ls(f)}$ denote the dual variables.

\setcounter{figure}{0}
\renewcommand{\thefigure}{EC.\arabic{figure}}

\setcounter{table}{0}
\renewcommand{\thetable}{EC.\arabic{table}}

\section{Supplementary on Data and Training} \label{EC3}
\subsection{Supplementary on Data}
The historical seasonal peak sunshine hours data are presented in Figure~\ref{fig:sunshine}.
Region~2 experiences the weakest seasonal fluctuations, while region~3 exhibits the most favorable solar irradiance conditions. Overall, as noted in Section 3.3, the entire climate data set is  partitioned into $S$ clusters using the $k$-means clustering method.
\begin{figure}[!ht]
        \centering
    \caption{Historical Sunshine Hours}
 \includegraphics[trim = 0 0 0 0, scale=.30]{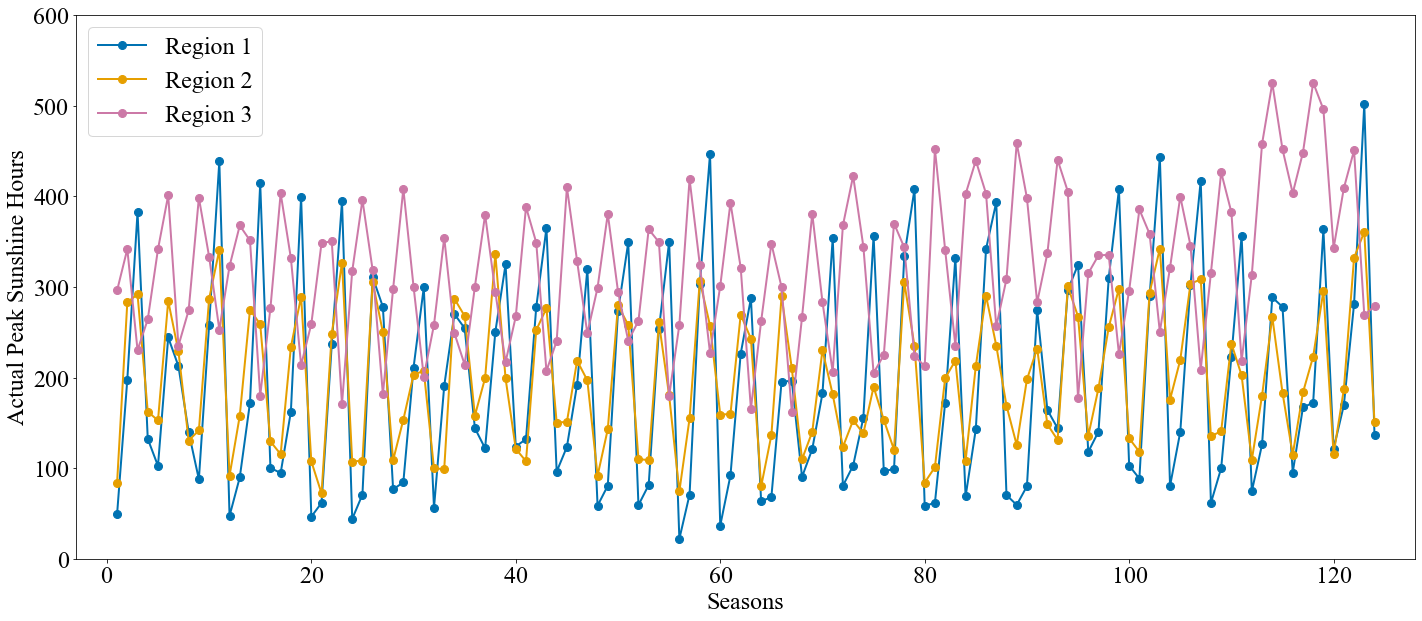}        \label{fig:sunshine}
\end{figure}

Next, we describe how the parameters of the ambiguity set are determined for each cluster.
To facilitate this study, the manufacturer provided a set of nominal demand data, presented in Table \ref{table:demand_baseline}. These data reflect the decision maker’s belief that when solar irradiance is at its historical average level, the nominal demand corresponds to the expected product demand for a given region and time period. Due to confidentiality constraints, additional proprietary demand data were not made available.

\begin{table}[!ht]
\caption{Nominal Demand ($\times10^{4}$)}
\centering
        {\small\def\arraystretch{1.5} 
\begin{tabular*}{0.6\textwidth}{@{\extracolsep{\fill}}lccccc}
\hline
\hline
Region & Product & Quarter 1 & Quarter 2 & Quarter 3 & Quarter 4\\
\hline
\multirow{3}{*}{Region 1} & A & 319.52 & 538.68 & 745.55 & 546.87\\
~ & B & 301.60 & 508.46 & 703.73 & 516.20\\
~ & C & 287.49 & 484.68 & 670.81 & 492.05\\
\multirow{3}{*}{Region 2} & A & 131.83 & 222.25 & 307.60 & 225.63\\
~ & B & 79.68 & 134.34 & 185.93 & 136.38\\
~ & C & 75.28 & 126.91 & 175.65 & 128.84\\
\multirow{3}{*}{Region 3} & A & 110.06 & 185.56 & 256.81 & 188.38\\
~ & B & 56.67 & 95.55 & 132.24 & 97.00\\
~ & C & 83.35 & 140.52 & 194.48 & 142.66\\
\hline
\hline
\end{tabular*}
\label{table:demand_baseline}
}
\end{table}

To perform numerical demonstrations, we use the values in Table \ref{table:demand_baseline} as a baseline to generate demand data across regions, products, and quarters. Specifically, we randomly sample from a predefined parameter space using a mixture of uniform and Gaussian distributions for each region-product-quarter combination. 
Note that the parameters of these distributions are centered around the corresponding nominal demand values and allowed for moderate variability (e.g., making different levels of perturbations on demand data for different quarters), which is reflecting empirical observations and maintaining consistency with realistic demand magnitudes.
Then, those data are assigned to clusters based on the associated  region and quarter. Consequently, the sample space interval for product $k$ in each cluster, $[\xi^{Ls}_{kt},\xi^{Us}_{kt}]$, is determined directly from the empirical minimum and maximum values of the demand data within cluster $s$. In addition, the interval bounds for the first moment of demand, $[ \gamma^{Ls}_{kt}, \gamma^{Us}_{kt}]$, are constructed using the empirical 10th and 90th percentiles of the corresponding demand data. 

We note that the sample space and the interval bounds for the first moment of demand can be readily modified to reflect different data accessibility or the decision maker’s risk attitude, which does not affect the validity of the methodologies presented in the paper, including the encoding/decoding scheme and WGAN-GP.

\subsection{Network Training Configurations} 
Data encoding/decoding and network model training are conducted on a server with an Intel Xeon 4210R processor, 64 GB of RAM, and an NVIDIA RTX A4000 GPU.
In addition to the aforementioned base case, a total of 5,100 instances are generated for training and testing purposes through random perturbations of cost coefficients, green penetration rates, and parameters of the ambiguity set of the base case. Specifically, to generate a new instance, each cost coefficient is obtained by multiplying the base case value by a random factor drawn from $[0.8,1.2]$, the green penetration rate is set to a random number between 1\% and 20\%, and the ambiguity set parameters are scaled by a factor between 1 and 4 relative to their base case counterparts. 

For the network training described in Section~5, 5,000 such single-cluster instances are generated, where their ambiguity sets are instantiated by using a randomly chosen historical sunshine record as the corresponding $h_{it}$ values. 
These small-scale instances can be solved efficiently to construct the original dataset for data encoding, which yields $61,055$ binary images with size $50\times12$. 
The network is then trained adversarially using paired observations of feature vectors and corresponding images. 
The training runs for $300$ epochs with a batch size of $512$.
The noise and feature vectors both have a length of $50$.
We use the Adam optimizer with its momentum parameters $\beta = (0.0, 0.999)$ for the generator and $\beta = (0.5, 0.999)$ for the critic.
The learning rate is set to $10^{-5}$ for both networks.
For each training iteration, the critic is updated four times per generator update to enhance its ability to distinguish real from generated data. 
Its loss function incorporates a gradient penalty term to enforce Lipschitz continuity, enhancing training stability. 
The gradient penalty coefficient is set to $10$.
The generator is then updated to maximize the critic’s Wasserstein evaluation of its outputs, learning to produce samples that are statistically indistinguishable from the underlying data distribution.

The remaining 100 instances are used for testing and are generated by using the complete historical sunshine records and by setting the number of clusters to 10. 
As noted in Sections~5 and~6, these testing instances serve as our primary platform to evaluate the two-stage DRO model, compare the AI-enhanced C\&CG-DRO algorithm with existing methods, and conduct subsequent performance analyses.

\section{Two-stage SAA-based SP Model} \label{EC4}
The general SP model can be written as follows, where $\mathbb{P}^{s}$ denote the distribution of $\boldsymbol{\xi}^{s}$ in cluster $s\in \mathcal S$:
 \begin{equation}
     \min_{\boldsymbol{X} \in \mathcal{X}} \boldsymbol{C}^\top_{\boldsymbol{X}}\boldsymbol{X} + \sum_{\mathcal{S}}q^{s}\mathbb{E}_{\mathbb{P}^{s}} \left[ Q^{s}(\boldsymbol{X},\boldsymbol{\xi}^{s}) \right].\nonumber
 \end{equation}
 
With sample average approximation (SAA) method, we can simplify the complex integration operations. 
Specifically, for cluster $s$, let $\mathcal{M}^s$ denote the set of sampled  scenarios in $s$ and $\boldsymbol\xi^s_m$ an element of $\mathcal M^s$. The SAA-based two-stage SP model is: 
 \begin{equation}
     \min_{\boldsymbol{X} \in \mathcal{X},\boldsymbol{Y}^{s}_{m} \in \mathcal{Y}^{s}(\boldsymbol{X},\boldsymbol{\xi}^{s}_{m})} \boldsymbol{C}^\top_{\boldsymbol{X}}\boldsymbol{X} + \sum_{\mathcal{S}}\frac{q^s}{|\mathcal M^s|}\sum_{\mathcal{M}^s} \boldsymbol{C}_{\boldsymbol{Y}}^\top \boldsymbol{Y}^{s}_{m}. \nonumber
 \end{equation}
Note that the detailed form of $\mathcal{Y}^{s}(\boldsymbol{X},\boldsymbol{\xi}^{s}_{m})$ is consistent with the formulation given in equation \eqref{eq:19} by introducing decision variable $\boldsymbol{Y}^{s}_m$ for every scenario $\boldsymbol{\xi}^{s}_m$.
In our computational experiments, $|\mathcal M^s|$ is set to 100. 
These samples are randomly selected from the sample spaces of each of the 10 clusters, according to a predefined distribution.

\end{document}